\newtheorem{Theorem}{Theorem}[section]
\newtheorem{Corollary}[Theorem]{Corollary}
\newtheorem{lemma}[Theorem]{Lemma}
\newtheorem{proposition}[Theorem]{Proposition}
\newtheorem{definition}[Theorem]{Definition}
\title{An Algebraic Brascamp--Lieb Inequality}
\author{Jennifer Duncan}
\affil{School of Mathematics, University of Birmingham}
\date{December 15, 2020}
\newcommand{\N}{\mathbb{N}}
\newcommand{\R}{\mathbb{R}}
\newcommand\reallywidehat[1]{%
\savestack{\tmpbox}{\stretchto{%
  \scaleto{%
    \scalerel*[\widthof{\ensuremath{#1}}]{\kern-.6pt\bigwedge\kern-.6pt}%
    {\rule[-\textheight/2]{1ex}{\textheight}}
  }{\textheight}%
}{0.5ex}}%
\stackon[1pt]{#1}{\tmpbox}%
}
\newcommand{\textsbf}[1]{\textnormal{\textbf{#1}}}
\newcommand{\BL}{\textnormal{BL}}
\newcommand{\dist}{\textnormal{dist}}
\renewcommand{\epsilon}{\varepsilon}
\providecommand{\keywords}[1]
{
  \small	
  \textbf{\textit{Keywords---}} #1
}
\begin{document}

\maketitle
\begin{abstract}
   The Brascamp-Lieb inequalities are a very general class of multilinear inequalities, well-known examples of which being H\"older's inequality, Young's convolution inequality, and the Loomis-Whitney inequality. Conventionally, a Brascamp-Lieb inequality is defined as a multilinear Lebesgue bound on the product of the pullbacks of a collection of functions $f_j\in L^{q_j}(\R^{n_j})$, for $j=1,...,m$, under some corresponding linear maps $B_j$. This regime is now fairly well understood \cite{bcct}, and moving forward there has been interest in nonlinear generalisations, where $B_j$ is now taken to belong to some suitable class of nonlinear maps. While there has been great recent progress on the question of local nonlinear Brascamp-Lieb inequalities \cite{bennett2020nonlinear}, there has been relatively little regarding global results; this paper represents some progress along this line of enquiry. We prove a global nonlinear Brascamp--Lieb inequality for `quasialgebraic' maps, a class that encompasses polynomial and rational maps, as a consequence of the multilinear Kakeya--type inequalities of Zhang and Zorin-Kranich. We incorporate a natural affine-invariant weight that both compensates for local degeneracies and yields a constant with minimal dependence on the underlying maps. We then show that this inequality generalises Young's convolution inequality on algebraic groups with suboptimal constant.
\end{abstract}
\keywords{Brascamp-Lieb inequalities, Kakeya inequalities, affine-invariance, multilinear harmonic analysis}\vspace{0.5cm}\\
\textbf{Declarations}\\
 This paper will form part of the author's PhD thesis under the supervision of Jonathan Bennett, whose guidance, patience, and support was invaluable to the production of this work, and was funded by a scholarship from the UK Engineering and Physical Sciences Research Council (EPSRC). The author would also like to thank the anonymous referee for their thorough and helpful feedback, Alessio Martini for offering many insightful suggestions that greatly improved the quality of the paper, and Karoline van Gemst for some stimulating discussions on related topics. 
\newpage
\section{Introduction}\label{intro}

A common feature of many problems studied in modern harmonic analysis is the presence of some underlying geometric object, examples including Kakeya inequalities, Fourier restriction theory, and generalised Radon transforms. Usually, this object is equipped with a measure that does not detect geometric features such as curvature or transversality, properties that are often highly relevant in the contexts we are considering. It has many times been found that incorporating a weight that tracks these geometric features in a suitable manner yields inequalities that require few geometric hypotheses and exhibit additional uniformity properties (in the context of generalised Radon-transforms and convolution with measures supported on submanifolds, see for example \cite{oberlin1999convolution, dendrinos2009universal, stovall2010endpoint, dendrinos2019p, gressman2019oberlin, christ2020endpoint,gressman2020p}, or in the context of Fourier restriction \cite{drury1985Fourier, carbery2002restriction, oberlin2004uniform, carbery2007restriction, bennett2017multilinear, bak2009restriction, dendrinos2010affine, hickman2014affine}). In particular, one often finds that if the geometric object in question may be parametrised by polynomials or rational functions, then the associated bounds will usually only depend on their degree, as observed in \cite{christ2020endpoint, dendrinos2009universal, dendrinos2010affine, dendrinos2019p, stovall2010endpoint} for example. \par
Our main theorem is another instance of this phenomenon, and is set in the context of a global nonlinear Brascamp--Lieb inequality, a term that we shall define in the next section. The underlying object in question is a collection of `quasialgebraic' maps, which is a class encompassing polynomial, rational, and algebraic maps. Like polynomials, a quasialgebraic map has an associated degree, and the bounds for the corresponding nonlinear Brascamp--Lieb inequalities we obtain depend only on these degrees, the underlying dimensions, and exponents.\par

\subsection{Brascamp-Lieb Inequalities: Linear and Nonlinear}

We shall begin with the definition of a linear Brascamp-Lieb inequality.

\begin{definition}
    Let $m,n,n_1,...,n_m\in\N$ and $p_j\in[0,1]$. Let $V$ be an $n$-dimensional Hilbert space, and, for each $1\leq j\leq m$, let $V_j$ be an $n_j$-dimensional Hilbert space. For each $1\leq j\leq m$, let $L_j:V\rightarrow V_j$ be a linear surjection. Define the $m$-tuples $\textsbf{L}:=(L_j)_{j=1}^m$ and $\textsbf{p}:=(p_j)_{j=1}^m$. We refer to the pair $(\textsbf{L},\textsbf{p})$ as a Brascamp--Lieb datum.\\
Given a Brascamp--Lieb datum $(\textsbf{L},\textsbf{p})$, we define the associated Brascamp--Lieb inequality as 
\begin{align}
    \int_V\prod_{j=1}^mf_j\circ L_j(x)^{p_j}d\lambda_V(x)\leq C\prod_{j=1}^m\left(\int_{V_j}f_j(x_j)d\lambda_{V_j}(x_j)\right)^{p_j},\label{linearBL}
\end{align}
where $\lambda_X$ denotes the induced Lebesgue measure associated to a Hilbert space $X$. We let $\BL(\textsbf{L},\textsbf{p})$ denote the smallest constant $C$ such that (\ref{linearBL}) holds for all $f_j\in L^1(V_j)$.
\end{definition}

These arise as a natural generalisation of many familiar multilinear inequalities from mathematical analysis, such as H\"older's inequality, Young's convolution inequality, and the Loomis-Whitney inequality, and became of greater interest within harmonic analysis once the role that transversality plays in multilinear restriction theory became apparent (see the survey article \cite{bennett2013aspects} for further discussion). Their study was initiated in the '70s by Brascamp, Lieb, and Luttinger \cite{brascamp1974general}, later continued by the authors of, for example, \cite{ball1989volumes, barthe1998optimal, carlen2004sharp}. Since then, necessary and sufficient conditions for finiteness and extremisability of Brascamp-Lieb inequalities were established by Bennett, Carbery, Christ, and Tao in \cite{bcct}. Additionally, some far-reaching connections with Brascamp-Lieb inequalities have been found in convex geometry \cite{conv}, kinetic theory \cite{carlen2004sharp}, number theory \cite{guo2018integer}, computer science \cite{garg2018algorithmic}, and group theory \cite{christ2015holder}.\par
It is common in applications to encounter nonlinear variants where the linear maps $L_j$ are replaced with nonlinear maps, as observed in \cite{carlen2004sharp, bennett2005non, bennett2010some, bennett2018stability} for example. Significant progress on inequalities of this type was made in \cite{bennett2020nonlinear}, where the authors establish the following highly general local nonlinear Brascamp-Lieb inequality.
 \begin{Theorem}[Bennett, Bez, Buschenhenke, Cowling, Flock 2018]
        For each $j\in\{1,...,m\}$, let $B_j:U\rightarrow M_j$ be a $C^2$ submersion defined on an open neighbourhood of a point $x_0\in\R^n$. For each $\epsilon>0$, there exists a $\delta>0$ such that 
        $$\int_{|x-x_0|\leq\delta}\prod_{j=1}^mf_j\circ B_j(x)^{p_j}dx\leq(1+\varepsilon)\BL(\textsbf{dB}(x_0),\textsbf{p})\prod_{j=1}^m\left(\int_{\R^{n_j}}f_j\right)^{p_j}.$$
 \end{Theorem}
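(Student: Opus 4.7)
The plan is to reduce the nonlinear inequality to the linear Brascamp--Lieb inequality for the datum $(d\textbf{B}(x_0),\textbf{p})$, by exploiting the $C^2$-proximity of each $B_j$ to its linearisation $L_j:=dB_j(x_0)$ on small balls about $x_0$.

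First, translate so that $x_0=0$ and $B_j(0)=0$, so that $B_j(x)=L_j(x)+O(|x|^2)$ and $dB_j(x)-L_j=O(|x|)$ for small $|x|$. Rescale via $x=\delta y$, $g_j(z):=f_j(\delta z)$, $\tilde B_j^\delta(y):=\delta^{-1}B_j(\delta y)$, and use the scaling condition $\sum_j p_j n_j=n$ (which must hold for $\BL(\textbf{L},\textbf{p})$ to be finite, the only non-trivial case) to recast the claim as
\begin{equation*}
\int_{|y|\leq 1}\prod_{j=1}^m g_j\bigl(\tilde B_j^\delta(y)\bigr)^{p_j}\,dy \;\leq\; (1+\epsilon)\,\BL(\textbf{L},\textbf{p})\prod_{j=1}^m\left(\int g_j\right)^{p_j},
\end{equation*}
uniformly in $g_j\in L^1$, where $\tilde B_j^\delta\to L_j$ uniformly in $C^1$ on the closed unit ball as $\delta\to 0$.

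Next, for each $j$ invoke the submersion (rank) theorem to construct a $C^2$ diffeomorphism $\Psi_j^\delta$ of a neighbourhood of $0$ with $\Psi_j^\delta(0)=0$, $d\Psi_j^\delta(0)=I$, $\|\Psi_j^\delta-\textnormal{id}\|_{C^1}=O(\delta)$, and $\tilde B_j^\delta\circ\Psi_j^\delta=L_j$. Straighten the maps successively: the change of variables $y=\Psi_1^\delta(z)$ replaces the first factor by $g_1\circ L_1$ and converts the $j\geq 2$ factors into $g_j\circ(\tilde B_j^\delta\circ\Psi_1^\delta)$, which remain $C^2$ submersions at $0$ with derivative $L_j$; iterating $m$ times accumulates a Jacobian factor $(1+O(\delta))^m$ and reduces the integrand to $\prod_j(g_j\circ L_j\circ Q_j^\delta)^{p_j}$ for some maps $Q_j^\delta$ with $\|Q_j^\delta-\textnormal{id}\|_{C^0}=O(\delta)$.

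The main obstacle is that this order-$\delta$ pointwise perturbation of the arguments of $g_j\circ L_j$ need not be negligible for generic $L^1$ functions, since straightening one $B_j$ necessarily un-straightens the previously linearised factors. I would address this by first reducing via density to continuous compactly supported $g_j$ (permissible since the target constant is $g_j$-independent), so each $g_j\circ L_j$ is uniformly continuous; the subadditivity $(a+b)^{p_j}\leq a^{p_j}+b^{p_j}$ for $p_j\in[0,1]$ then expands the perturbed product into the main linear term plus remainders pairing small powers of the modulus of continuity with subproducts controlled on the unit ball. Combining all multiplicative losses and applying the linear inequality $\int\prod_j(g_j\circ L_j)^{p_j}\leq\BL(\textbf{L},\textbf{p})\prod\|g_j\|_1^{p_j}$ yields the claim for $\delta$ sufficiently small.
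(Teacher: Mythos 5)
This theorem is quoted from \cite{bennett2018nonlinear} as background; the present paper does not reprove it, so there is no internal argument to compare against. Judged on its own, your proposal correctly carries out the rescaling, the submersion-theorem straightening, and the diagnosis that straightening one $B_j$ disturbs the others; the gap is in the final step, and it is precisely the crux of the theorem. After reducing to continuous compactly supported $g_j$ and applying $(a+b)^{p_j}\leq a^{p_j}+b^{p_j}$, the remainder terms are \emph{additive} and carry moduli of continuity of the $g_j$: a typical one is $\omega_{g_j}(C\delta)^{p_j}\int_{|w|\leq 1}\prod_{i\neq j} g_i(L_iw)^{p_i}\,dw$. For each fixed tuple $(g_j)$ these do vanish as $\delta\to 0$, but they are not $o\bigl(\prod_j\|g_j\|_1^{p_j}\bigr)$ \emph{uniformly} over the $g_j$: normalising $\|g_j\|_1=1$ and taking $g_j$ to be a continuous spike of width $\eta$ makes $\omega_{g_j}(C\delta)\sim\delta\,\eta^{-n_j-1}$, so the remainder is of order $\delta\,\eta^{-n_j-1}$ while the right-hand side is of order $1$. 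Your $\delta$ therefore depends on the inputs, whereas the theorem demands a single $\delta=\delta(\epsilon)$ valid for every $f_j\in L^1$ simultaneously --- and your own density step, passing from continuous $g_j$ back to general $f_j$, needs exactly that uniformity.

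This is not a patchable detail: obtaining $\delta$ uniform in the data is the whole difficulty, and it is why local nonlinear Brascamp--Lieb inequalities at this level of generality resisted proof for so long. The argument in \cite{bennett2018nonlinear} does not linearise-and-perturb; it runs an induction on scales in the Bourgain--Guth/multilinear-Kakeya style, driven by Lieb's theorem that Gaussians near-extremise the linear inequality and by quantitative stability of the Brascamp--Lieb constant under perturbation of the datum, arranged so that the constant loss incurred per scale is summable. To close your proposal one would need a mechanism converting the $O(\delta)$ argument-perturbation into a \emph{multiplicative} loss uniform over unit-mass inputs, and no elementary such mechanism is available.
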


The study of global Brascamp-Lieb inequalities is currently at the stage of case by case examples, which include an inequality proved by Bennett, Bez, and Gutierrez for nonlinear data of degree one \cite{bennett2013global} and a certain global trilinear inequality of Koch and Steinerberger \cite{koch2015convolution}. It was first suggested in \cite{bennett2005non} that a global Brascamp-Lieb inequality should include an appropriate weight factor in order to compensate for local degeneracies, and it is upon this suggestion that we include a weight factor of the form $\BL(\textsbf{dB}(x),\textsbf{p})^{-1}$ in our inequality. It was also discussed in the same paper that even with an appropriate weight factor one cannot expect a global nonlinear Brascamp-Lieb inequality to hold with only local hypotheses, due to reasons relating to infinite failure of injectivity. We address this issue by imposing that our nonlinear maps are \textit{quasialgebraic}, a property we define in the following section, that entails that the fibres of our maps are algebraic varieties, the heuristic motivation being that B\'ezout's theorem then eliminates such global injectivity issues.

\subsection{Preliminary Definitions and Notation}
Before we define the notion of a quasialgebraic map, we should first clarify the notion of an algebraic variety.
\begin{definition}
A subset $H\subset \R^n$ is an algebraic variety in $\R^n$ if and only if there exists a finite collection of polynomials $\mathcal{P}\subset\R[x_1,...,x_n]$ such that
\begin{align}
   H=\{x\in \R^n: p(x)=0 \hspace{5pt}\forall p\in\mathcal{P}\} \label{eq:varietydef}
\end{align}
We then define the \textnormal{degree} of $H$ to be the minimum of the quantity $\max_{p\in\mathcal{P}}\deg p$ as $\mathcal{P}$ ranges over all collections of polynomials such that (\ref{eq:varietydef}) holds.
\end{definition}
For instance, any finite set of points is an algebraic variety, and its degree is equal to its cardinality. By the implicit function theorem, if $M$ admits a defining vector-valued polynomial $\textsbf{p}=(p_1,...,p_{n-d}):\R^n\rightarrow\R^{n-d}$ whose derivative has full rank at a point $p\in M$, then $M$ is locally a $d$-dimensional manifold near $p$, and we refer to such $p$ as non-singular points of $M$. If the non-singular points of $M$ form an open and dense subset of $M$, we shall refer to $M$ as a $d$-dimensional variety. We remark that, while being perfectly suitable for our purposes, this is a restricted definition, and would be more widely referred to as the definition of a real affine variety. A more general definition of an algebraic variety can be found in \cite{hartshorne1977graduate} for example.
\begin{definition}\label{quasi}
   Let $M\subset\R^n$ be an open subset of a $d$-dimensional algebraic variety and let $N$ be a Riemannian manifold. We say that a map $F:M\rightarrow N$ that is $C^{\infty}$ on an open dense subset of $M$ is $\textnormal{quasialgebraic}$ if its fibres are open subsets of algebraic varieties. We define the degree of $F$ to be the maximum degree of its fibres (this may be infinite).
\end{definition}
The author is not aware of this notion of a quasialgebraic map being discussed anywhere in the literature, however this is not to pretend that it is an innovative concept, merely one that is very much tailored to our purposes. As remarked earlier, the class of quasialgebraic maps encompasses many important classes of maps,
as ordered below.
$$\{\textit{polynomial maps}\}\subset\{\textit{rational maps}\}\subset\{\textit{algebraic maps}\}\subset\{\textit{quasialgebraic maps}\}$$
As one would hope, the notion of degree in Definition \ref{quasi} coincides with the conventional notion of degree for each of the above classes. It is easy to check that, unlike the classes of polynomial, rational, and algebraic maps, the class of quasialgebraic maps is `closed' under diffeomorphism, in the sense that given a quasialgebraic map $F:M\rightarrow N$, and a diffeomorphism $\phi:N\rightarrow N'$, the map $F':=\phi\circ F:M\rightarrow N'$ is a quasialgebraic map of the same degree as $F$.\par\vspace{0.2cm}

 Before moving onto stating our main theorem we should first state our notational conventions. In this paper, the expression `$A\lesssim B$' will be used to denote that `$A\leq CB$', where $C>0$ is a constant depending only upon the relevant dimensions and exponents, and the expression `$A\simeq B$' will be used to denote that `$A\lesssim B\lesssim A$'. Given a metric space $M$, We let $U_r(x)$ denote an open ball of radius $r>0$ centred at a point $x\in M$, and we denote the centred dilate of a ball $V$ by a factor $c>0$ by $cV$. Notice that at some points either $dB_j$ will not be defined or will fail to be surjective; in such cases we set $\BL(\textsbf{dB}(x),\textsbf{p})=\infty$. Given a Brascamp-Lieb datum $(\textsbf{L},\textsbf{p})$ such that $L_j:V\rightarrow V_j$ and a subspace $W\leq V$, we let $\BL_W(\textsbf{L},\textsbf{p})$ denote the best constant $C>0$ in the following `restricted' Brascamp-Lieb inequality.

\begin{align} \int_W \prod_{j=1}^mf_j\circ L_j(x)^{p_j}d\lambda_W(x)\leq C\prod_{j=1}^m\left(\int_{L_jW}f_j(x_j)d\lambda_{L_jW}(x_j)\right)^{p_j}.\label{eq: restbraslieb}\end{align}

Lastly, we shall denote the zero-set of a polynomial map $p:\R^n\rightarrow\R^k$ by $Z(p):=\{x\in\R^n:p(x)=0\}$.

\subsection{Main Results}

We shall now state our main theorem.

\begin{Theorem}[Quasialgebraic Brascamp--Lieb Inequality]\label{algbras}\label{genalgbras}
Let $d,m,n\in\N$ and, for each $1\leq j\leq m$, let $n_j\in\N$ and $p_j\in[0,1]$. Assume that the scaling condition $\sum_{j=1}^mp_jn_j=d$ is satisfied. Let $M\subset\R^n$ be an open subset of a $d$-dimensional algebraic variety, and for each $j\in\{1,...,m\}$, let $ M_j$ be an $n_j$-dimensional Riemannian manifold.\par
We consider quasialgebraic maps $B_j:M\rightarrow  M_j$ that extend to quasialgebraic maps on some open set $A\subset\R^n$. Setting $\textsbf{p}:=(p_1,...,p_m)$ and equipping each $ M_j$ with the measure $\mu_j$ induced by its Riemannian metric, the following inequality holds for all $f_j\in L^1( M_j)$:
\begin{align}
    \int_M\prod_{j=1}^mf_j\circ B_j(x)^{p_j}\frac{d\sigma(x)}{\BL_{T_xM}(\textsbf{dB}(x),\textsbf{p})}\lesssim\deg(M)\prod_{j=1}^m\left(\deg(B_j)\int_{ M_j}f_j(x_j)d\mu_j(x_j)\right)^{p_j},\label{eq:algbras}
\end{align}
where $\sigma$ is the induced $d$-dimensional Hausdorff measure on $M$.
\end{Theorem}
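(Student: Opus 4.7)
The plan is to combine the local nonlinear Brascamp--Lieb inequality of Bennett, Bez, Buschenhenke, Cowling, and Flock stated above with a multilinear Kakeya--type inequality of Zhang and Zorin-Kranich in its form adapted to algebraic varieties, using the quasialgebraic hypothesis to control overlaps and recover a constant depending only on the degrees.

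First I would perform a discretisation. For $\delta>0$ small, cover $M$ by a Whitney-style family of $\delta$-balls $\{U_\alpha\}$ centred at smooth points $x_\alpha$. Invoking Theorem 1.2 on each such ball, after possibly shrinking $\delta$, gives a local estimate of the form
\begin{align*}
\int_{U_\alpha}\prod_{j=1}^m f_j\circ B_j(x)^{p_j}\,d\sigma(x)\leq(1+\epsilon)\,\BL_{T_{x_\alpha}M}(\textsbf{dB}(x_\alpha),\textsbf{p})\prod_{j=1}^m\left(\int_{M_j}f_j\cdot\mathbf{1}_{V_j^\alpha}\,d\mu_j\right)^{p_j},
\end{align*}
where $V_j^\alpha\supset B_j(U_\alpha)$ is a small neighbourhood of $B_j(x_\alpha)$ in $M_j$. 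Dividing through by $\BL_{T_{x_\alpha}M}(\textsbf{dB}(x_\alpha),\textsbf{p})$ produces precisely the affine-invariant weight appearing on the left of (\ref{eq:algbras}), absorbing the local Brascamp--Lieb constant into the weight and leaving behind only the right-hand tails supported on the $V_j^\alpha$.

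The task then reduces to summing these local estimates, which is where the quasialgebraic hypothesis plays its role. After grouping the $V_j^\alpha$ into $\delta$-balls in $M_j$ and rewriting, the sum of the right-hand sides is controlled by a multilinear Kakeya--type expression of the form $\int_M\prod_j\big(\sum_\beta \mathbf{1}_{B_j^{-1}(W_j^\beta)}\big)^{p_j}\,d\sigma$, in which each $W_j^\beta$ is a $\delta$-ball in $M_j$. Since each fibre $B_j^{-1}(y)$ is, by hypothesis, an open subset of an algebraic variety of degree at most $\deg(B_j)$, the fattened fibres $B_j^{-1}(W_j^\beta)$ form a family of $\delta$-neighbourhoods of algebraic varieties of controlled degree, whose intersection pattern is governed by a B\'ezout-type bound. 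This places us exactly in the setting of the multilinear Kakeya inequality of Zhang and Zorin-Kranich, which yields
\begin{align*}
\int_M\prod_{j=1}^m\Big(\sum_\beta\mathbf{1}_{B_j^{-1}(W_j^\beta)}\Big)^{p_j}\,d\sigma\lesssim\deg(M)\prod_{j=1}^m\deg(B_j)^{p_j}\prod_{j=1}^m|\{W_j^\beta\}|^{p_j}\delta^{p_jn_j},
\end{align*}
with implicit constant depending only on $n,d,m,n_j,p_j$. Using the scaling condition $\sum_jp_jn_j=d$, the $\delta$-powers cancel, and passing $\delta\to0$ recovers (\ref{eq:algbras}).

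The main obstacle is achieving a clean parity between the affine normalisation used locally by Bennett et al.\ and the algebraic Kakeya machinery used globally. Concretely, one must verify that dividing by $\BL_{T_xM}(\textsbf{dB}(x),\textsbf{p})$ on the left absorbs \emph{exactly} the local constants without leaving a residual factor that would prevent the Zhang / Zorin-Kranich step from closing with the advertised degree dependence; this is essentially the statement that the algebraic Kakeya--Brascamp--Lieb duality is sharp on the affine-invariant measure $\BL^{-1}d\sigma$. A secondary issue is the closed null set on which the $B_j$ fail to be smooth, which has positive codimension in $M$ and can be excised from the domain of integration by a standard limiting argument without affecting either side of (\ref{eq:algbras}).
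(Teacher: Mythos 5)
Your proposal has a genuine gap at the sum-and-close step, and the route via Theorem 1.2 (the local nonlinear Brascamp--Lieb inequality of Bennett et al.) is not the one the paper takes.

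The paper does not use Theorem 1.2 at all. Instead it discretises the inputs $f_j$ into sums of indicators of $\delta$-balls (Proposition \ref{discretealgbras}), constructs ``discrete foliations'' $\widetilde H_j$ by taking unions of fibres $B_j^{-1}(z)$ over a $\delta^\alpha$-mesh $\Gamma(V_j)$ inside each $V_j$, and feeds these algebraic varieties directly into Zorin-Kranich's Kakeya inequality (\ref{eq:zorin}). The crucial bookkeeping is then done by Lemma \ref{formula}, which factorises $\BL(\textsbf{dB}(x),\textsbf{p})$ as $BL'(\overrightarrow{\ker dB_j(x)},\textsbf{p})\prod_j |R_j(x)|^{-p_j}$, and Lemma \ref{ptws}, which converts the overlap count $\sum_{z\in\Gamma(V_j)}\chi_{B_j^{-1}(\{z\})+U_{\delta^\beta}(0)}$ into $|R_j(x)|\chi_{V_j}\circ B_j(x)$ up to powers of $\delta$.

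The concrete gap in your argument is the displayed ``multilinear Kakeya--type expression''
\begin{align*}
\int_M\prod_{j=1}^m\Big(\sum_\beta\mathbf{1}_{B_j^{-1}(W_j^\beta)}\Big)^{p_j}\,d\sigma\lesssim\deg(M)\prod_{j=1}^m\deg(B_j)^{p_j}\prod_{j=1}^m|\{W_j^\beta\}|^{p_j}\delta^{p_jn_j}.
\end{align*}
This inequality, as written, cannot hold with a constant depending only on degrees and exponents, because the left side carries no transversality weight. For instance if $B_1=\dots=B_m$ the left side grows like a single mass while the right side decouples; the Zhang and Zorin-Kranich theorems always include a wedge or $BL'$ factor in the integrand for precisely this reason. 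In other words, the transversality weight has to end up on the left of the estimate, matched against the $\BL^{-1}$ factor in the target inequality, and the mechanism for that matching (Lemma \ref{formula}) is the central point you have not supplied. Relatedly, dividing the Bennett et al.\ local estimate by $\BL_{T_{x_\alpha}M}(\textsbf{dB}(x_\alpha),\textsbf{p})$ does produce a weight on the left of each local piece, but then the right sides are $\prod_j(\int_{V_j^\alpha}f_j)^{p_j}$ with no remaining transversality information; summing these over $\alpha$ requires exactly the sort of overlap control that is lost once the weight has been moved to the other side of the inequality. A secondary issue is that Theorem 1.2 gives a $\delta$ depending on $x_0$, and near points where $\BL(\textsbf{dB}(x_0),\textsbf{p})$ blows up this $\delta$ can degenerate; the paper handles this by excising the bad set and exhausting by compacts rather than by invoking a single $\delta$-cover.

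What is salvageable in your sketch: the discretisation idea (approximate $f_j$ by sums of $\delta$-ball indicators), the appeal to Zorin-Kranich, and the heuristic that quasialgebraic fibres control degrees via B\'ezout all appear in the actual proof. What is missing is the explicit construction of the algebraic varieties $\widetilde H_j$ that the Kakeya theorem is applied to, the factorisation of $\BL$ into $BL'\cdot\prod|R_j|^{-p_j}$, and the tube-overlap count of Lemma \ref{ptws} that makes the $|R_j|$ factor appear on the nose.
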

Notice that we impose no local condition on the maps $B_j$, not even that they are submersions. This is allowed because the weight we have incorporated on the left-hand side vanishes when the maps $B_j$ degenerate, hence we do not have to worry about counterexamples where the functions $f_j$ concentrate on points whose fibres admit poorly behaved intersections. One should also observe the similarity between this weight and the weight used in Theorem 1 of \cite{gressman2020p}. In particular, this Theorem immediately gives us a less powerful, but more concisely stated weighted nonlinear Brascamp-Lieb inequality for polynomial maps.
\begin{Corollary}[Polynomial Brascamp-Lieb Inequality]
    Let the dimensions and exponents be as in Theorem \ref{genalgbras}, and let $B_j:\R^d\rightarrow\R^{n_j}$ be polynomial maps. The following inequality holds over all $f_j\in L^1(\R^{n_j})$:
    \begin{align}
    \int_{\R^d}\prod_{j=1}^mf_j\circ B_j(x)^{p_j}\frac{dx}{\BL(\textsbf{dB}(x),\textsbf{p})}\lesssim\prod_{j=1}^m\left(\deg(B_j)\int_{\R^{n_j}}f_j(x_j)dx_j\right)^{p_j}.
\end{align}
\end{Corollary}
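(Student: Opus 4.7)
The plan is to deduce this corollary as a direct specialization of Theorem~\ref{algbras} to the case $M = \R^d$, $n = d$, with each $B_j$ polynomial. I will verify each hypothesis of the theorem and show that every ingredient on the right-hand side of (\ref{eq:algbras}) collapses, in this setting, to what appears in the Corollary.

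First I would observe that $\R^d$ is itself a $d$-dimensional algebraic variety (for instance, the vanishing locus of the zero polynomial), and so qualifies as a valid choice of $M \subset \R^n$ with $n = d$; in particular $\deg(M) = O(1)$, a constant which can be absorbed into the $\lesssim$. The induced Hausdorff measure on $M$ is then simply Lebesgue measure $dx$, and each Riemannian measure $\mu_j$ reduces to Lebesgue measure on $\R^{n_j}$. Since $T_x \R^d = \R^d$ at every point, the restricted Brascamp--Lieb constant $\BL_{T_xM}(\textbf{dB}(x), \textbf{p})$ coincides with the standard $\BL(\textbf{dB}(x), \textbf{p})$ appearing in the Corollary.

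Next I would verify that each polynomial map $B_j : \R^d \to \R^{n_j}$ is quasialgebraic of degree at most $\deg B_j$. This is immediate from Definition~\ref{quasi}: $B_j$ is $C^{\infty}$ everywhere, it trivially extends to $A = \R^d$, and its fibres $B_j^{-1}(y) = Z(B_j - y)$ are algebraic varieties cut out by the $n_j$ scalar polynomials $(B_j)_i - y_i$, each of degree at most $\deg B_j$. Hence the maximal fibre degree, and thus the quasialgebraic degree of $B_j$, is bounded by $\deg B_j$.

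With these identifications in place, (\ref{eq:algbras}) becomes precisely the claimed polynomial inequality, after absorbing the factor $\deg(M) = O(1)$ into the implicit constant in $\lesssim$. There is essentially no obstacle here beyond carefully unpacking definitions and confirming that the scaling condition $\sum_{j=1}^m p_j n_j = d$ is identical in both statements, which it is. The real content of the Corollary is already contained in Theorem~\ref{algbras}; the polynomial case is simply its cleanest illustration.
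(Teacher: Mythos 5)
Your proof is correct and matches the paper's intent exactly: the paper presents this Corollary as an immediate specialization of Theorem~\ref{genalgbras} to $M=\R^d$ (so $n=d$, $T_xM=\R^d$, $\sigma$ is Lebesgue, $\deg(M)$ is absorbed into $\lesssim$), together with the observation that polynomial maps are quasialgebraic with quasialgebraic degree bounded by polynomial degree. Nothing further is needed.
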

Brascamp-Lieb inequalities were first studied as a generalisation of Young's convolution inequality on $\R^n$ in \cite{brascamp1974general}, it is therefore fitting that one may view Theorem \ref{genalgbras} as a generalisation of Young's convolution inequality on algebraic groups, those being algebraic varieties equipped with a group structure such that the associated multiplication and inversion maps are `morphisms' of varieties, i.e. restrictions of polynomial maps. 

\begin{Corollary}\label{youngs}
Let $G$ be an algebraic group, with left-invariant Haar measure $d\mu$. We let $\Delta:G\rightarrow(0,\infty)$ be the modular character associated to $(G,\mu)$, which is the unique homomorphism such that for all measurable $f:G\rightarrow\R$,
$$\int_Gf(x)d\mu(x)=\Delta(g)\int_Gf(xg)d\mu(x).$$
We define left-convolution as follows: 
$$f\ast g(x):=\int_G f(xy^{-1})g(y)d\mu(y)$$

The inequality \eqref{eq:youngs} holds for all $p_1,...,p_m,r\in[1,\infty]$ such that $\frac{1}{r'}=\sum_{j=1}^m\frac{1}{p'_j}$, and all $f_j\in L^{p_j}(G)$,
\begin{align}
    \left\Vert \Asterisk_{j=1}^mf_j\Delta^{\sum_{l=1}^{j-1}\frac{1}{p_l'}}\right\Vert_{L^r(G)}\lesssim\deg(G)\deg(m_G)^{\sigma}\prod_{j=1}^m\Vert f_j\Vert_{L^{p_j}(G)}\label{eq:youngs}
\end{align}
where $m_G:G\times G\rightarrow G$ is the multiplication operation, and $\sigma:=\sum_{j=1}^m\frac{1}{p_j}$.
\end{Corollary}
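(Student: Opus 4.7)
The plan is to dualise the $L^r$-norm and present the resulting $(m+1)$-linear form on the variety $G^m$ as a quasialgebraic Brascamp--Lieb integrand, to which Theorem~\ref{algbras} applies. By the $L^r$--$L^{r'}$ duality it suffices to bound
\begin{align*}
    \int_{G^m} g(x) \prod_{j=1}^{m} f_j(B_j(z))\, d\mu^{\otimes m}(z)
\end{align*}
uniformly over $g$ with $\|g\|_{L^{r'}} \leq 1$, where $z = (x, y_1, \dots, y_{m-1}) \in G^m$, $B_j(z) = y_{j-1}y_j^{-1}$ for $1 \leq j \leq m-1$ (with the convention $y_0 := x$), and $B_m(z) = y_{m-1}$. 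This expansion is obtained by unfolding the iterated convolution through Fubini; the modular twists $\Delta^{\sum_{l=1}^{j-1}1/p'_l}$ appearing inside the convolution in \eqref{eq:youngs} are precisely the powers of $\Delta$ produced by the change-of-variable identity $\int F(yh)\, d\mu(y) = \Delta(h)^{-1} \int F\, d\mu$ at each stage of the unfolding, arranged so as to convert the nested right-translations into a single integral against the left Haar measure $\mu^{\otimes m}$.

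Next I would set $F_0 := g^{r'}$, $F_j := f_j^{p_j}$, with exponents $q_0 := 1/r'$ and $q_j := 1/p_j$, so that the integrand becomes $\prod_{j=0}^m F_j(B_j(z))^{q_j}$ with $B_0(z) := x$, the Young hypothesis $1/r' = \sum_{j=1}^m 1/p'_j$ translates to the scaling condition $\sum_{j=0}^m q_j \dim G = \dim G^m$, and the right-hand side of \eqref{eq:algbras} specialises to $\|g\|_{L^{r'}} \prod_j \|f_j\|_{L^{p_j}}$. The source variety $G^m$ has degree at most $\deg(G)$ (it is cut out by the defining polynomials of $G$ applied block-wise), and each $B_j$ extends to a quasialgebraic map on $(\R^n)^m$ whose degree is controlled by a fixed power of $\deg(m_G)$, so all hypotheses of Theorem~\ref{algbras} are met.

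The remaining and most delicate step is the handling of the weight $\BL_{T_zG^m}(\textsbf{dB}(z), \textsbf{q})^{-1}$ in \eqref{eq:algbras}. By the covariance of the $B_j$ under left- and right-translations, the linear data $\textsbf{dB}(z)$ at a general point differs from $\textsbf{dB}(e)$ by pre- and post-composition with invertible linear isomorphisms whose determinants are monomials in $\Delta(x), \Delta(y_1), \dots, \Delta(y_{m-1})$. The standard determinant-rescaling identity for linear Brascamp--Lieb constants from \cite{bcct} then factors $\BL_{T_zG^m}(\textsbf{dB}(z), \textsbf{q})$ as such a $\Delta$-monomial times the fixed constant $\BL_{\mathfrak{g}^m}(\textsbf{dB}(e), \textsbf{q})$, which is the classical Young convolution Brascamp--Lieb constant on $\mathfrak{g}^m$ and is finite by \cite{bcct}. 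The main obstacle will be verifying that the $\Delta$-twists inside the convolution, the modular Jacobians from the Fubini-unfolding, and the $\Delta$-exponents appearing in this weight conspire to produce a point-independent integrand; once this book-keeping is complete, Theorem~\ref{algbras} closes the argument and the degree bounds combine polynomially in $\deg(G)$ and as $\deg(m_G)^{\sigma}$ from $\prod_{j=1}^m \deg(B_j)^{q_j} \lesssim \deg(m_G)^{\sum_j 1/p_j}$.
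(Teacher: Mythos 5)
Your strategy is essentially the one the paper uses: dualise the $L^r$-norm, recast the resulting $(m+1)$-linear form as a Brascamp--Lieb integral on $G^m$ with the dual function $g$ attached to one more map, invoke Theorem~\ref{genalgbras}, and then verify that the weight $\BL_{T_zG^m}(\textsbf{dB}(z),\textsbf{p})^{-1}$ factors into the sharp Euclidean Young constant times the correct $\Delta$-monomial and measure-conversion factor. The main difference is cosmetic: you work in the ``unfolded'' coordinates $(x,y_1,\dots,y_{m-1})$ with $B_j(z)=y_{j-1}y_j^{-1}$, whereas the paper works directly in $(x_1,\dots,x_m)$ with the $B_j$ being coordinate projections and $B_{m+1}(\underline{x})=\prod_j x_j$. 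The two are related by the bijection $x_j=B_j(z)$, so your formulation is not wrong, but the paper's has the virtue that all but one of the maps are linear projections of degree one, which makes both the degree accounting and the tangent-space computation visibly simpler (in your parametrisation every $B_j$ with $1\le j\le m-1$ involves multiplication and inversion, so each contributes to the degree budget and to the twisting of the tangent data).

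The real gap is that the claim you call ``book-keeping'' is in fact the heart of the proof. The paper's Corollary~\ref{youngs} rests entirely on establishing the identity~(\ref{eq: youngsid}), namely
\[
\BL_{T_{\underline{x}}G^m}(\textsbf{dB}(\underline{x}),\textsbf{p})=B_{\textsbf{p},n}\prod_{j=1}^m\omega(x_j)^{-1}\Delta(x_j)^{-\sum_{l=1}^{j-1}1/p_j'},
\]
and proving this requires an explicit computation: transporting the derivative data to the Lie algebra $\frak{g}$ by left-translation, applying the sharp Euclidean multilinear Young inequality, and carefully tracking the modular Jacobians under the substitution $v_j\mapsto x_1\cdots x_m(x_1\cdots x_{j-1})^{-1}v_j(x_{j+1}\cdots x_m)^{-1}$. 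Your sketch asserts that the translation data introduce determinants which are monomials in $\Delta$ and that these ``conspire to produce a point-independent integrand,'' but you do not carry out the cancellation, nor do you verify sharpness, which is needed to get the identity rather than just an upper bound. In addition you have dropped the factor $\omega(x_j)$ relating the Haar measure $d\mu$ to the Hausdorff measure $d\sigma$: Theorem~\ref{genalgbras} is stated with the Hausdorff measure $d\sigma$ on $M=G^m$, not with $d\mu^{\otimes m}$, and this discrepancy is exactly what the $\omega$-factors in the paper's weight identity absorb. Without these two ingredients the argument does not actually produce inequality~(\ref{eq:youngs}), so the proposal as written is a correct outline of strategy but is missing the central computation.
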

 We give a proof of this corollary in section \ref{appendix}. It is important to note that since the best constant for Young's inequality on locally compact topological groups is always less than 1 \cite{terp2017p}, Corollary \ref{youngs} does not offer any improvement to the theory, however it is nonetheless included in this paper for the sake of context; we refer the reader to \cite{quek1984sharpness, inoue1992lp, terp2017p, cowling2019hausdorff} for further details on Young's inequality in abstract settings. We remarked earlier on that Theorem \ref{genalgbras} is an example of an affine-invariant inequality, in the sense that the left-hand side is invariant under the natural action $A:B_j\mapsto B_j\circ A$ of $GL_n(\R)$ on the class of quasialgebraic data, however this inequality in fact exhibits a more general diffeomorphism-invariance property, as described by the following proposition.
\begin{proposition}\label{difinv}
Let the dimensions and exponents be as in Theorem \ref{algbras}. Let $M$ and $\widetilde{M}$ be $d$-dimensional Riemannian manifolds equipped with induced measures $\mu$ and $\widetilde{\mu}$, and, for each $1\leq j\leq m$, let $ M_j$ be an $n_j$-dimensional Riemannian manifold. Let $B_j:M\rightarrow  M_j$ be a.e. $C^1$, and $\phi:\widetilde{M}\rightarrow M$ be a diffeomorphism. Defining $\widetilde{\textsbf{B}}=(\tilde{B}_j)_{j=1}^m=(B_j\circ\phi)_{j=1}^m$, the following then holds for all $f_j\in L^1( M_j)$:
\begin{align*}
    \int_{\widetilde{M}}\prod_{j=1}^mf_j\circ \widetilde{B}_j(x)^{p_j}\frac{d\widetilde{\mu}(x)}{\BL_{T_x\widetilde{M}}(\textsbf{d}\widetilde{\textsbf{B}}(x),\textsbf{p})}=\int_{M}\prod_{j=1}^mf_j\circ B_j(x)^{p_j}\frac{d\mu(x)}{\BL_{T_x M}(\textsbf{dB}(x),\textsbf{p})}
\end{align*}
\end{proposition}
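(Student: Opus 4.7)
The plan is to verify the identity by making the substitution $y = \phi(x)$ in the integral on $M$, and showing that the Jacobian of $\phi$ absorbs precisely the discrepancy between the two restricted Brascamp--Lieb constants. Writing $J\phi(x) := \sqrt{\det(d\phi(x)^{*}d\phi(x))}$ for the Riemannian Jacobian, the standard change of variables reads
\[
\int_M g(y)\,d\mu(y) = \int_{\widetilde M} g(\phi(x))\,J\phi(x)\,d\widetilde{\mu}(x).
\]
Since $f_j \circ \widetilde{B}_j = f_j \circ B_j \circ \phi$, the proposition reduces to the pointwise identity
\[
\BL_{T_x\widetilde{M}}(\textbf{d}\widetilde{\textbf{B}}(x), \textsbf{p}) \;=\; J\phi(x)^{-1}\,\BL_{T_{\phi(x)}M}(\textsbf{dB}(\phi(x)), \textsbf{p})
\]
for a.e.\ $x \in \widetilde M$.

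The key ingredient is a general scaling identity for the restricted Brascamp--Lieb constant: if $T : V' \to V$ is a linear isomorphism of finite-dimensional Hilbert spaces and $\textsbf{L} = (L_j)_{j=1}^m$ is a tuple of surjections $L_j : V \to H_j$, then
\[
\BL_{V'}(\textsbf{L} \circ T, \textsbf{p}) \;=\; |\det T|^{-1}\,\BL_V(\textsbf{L}, \textsbf{p}),
\]
where $|\det T|$ denotes the absolute value of the determinant in orthonormal bases. This falls out of the defining inequality (\ref{eq: restbraslieb}) on $V$ by the linear substitution $x = Tx'$: the factor $|\det T|$ is produced by $d\lambda_V$, while $(L_j \circ T)(V') = L_j(V)$ ensures that the integrals on the right-hand side are unchanged. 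Taking the reverse substitution shows the reciprocal bound, so equality holds.

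The proposition then follows by applying this lemma pointwise with $T = d\phi(x) : T_x\widetilde{M} \to T_{\phi(x)}M$ and $L_j = dB_j(\phi(x))$, together with the chain rule $d\widetilde{B}_j(x) = dB_j(\phi(x)) \circ d\phi(x)$ and the identification $|\det d\phi(x)| = J\phi(x)$ arising from the Riemannian structures. The only mild issue is regularity: since the $B_j$ are only assumed a.e.\ $C^1$, one must restrict all of the above to the full-measure open subset of $\widetilde M$ on which $\phi$ and each $\widetilde{B}_j$ are smooth; the complement is null and contributes nothing to either integral. I do not foresee any genuine obstacle, as the substance of the argument is entirely contained in the linear scaling lemma, and the remainder amounts to the bookkeeping statement that the weight $\BL_{T_xM}(\textsbf{dB}(x), \textsbf{p})^{-1}\,d\mu$ is an intrinsic density on $M$ in the sense of transforming correctly under pullback.
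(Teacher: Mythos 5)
Your argument is correct and follows essentially the same route as the paper: change of variables under $\phi$ combined with the linear transformation law $\BL_{V'}(\textsbf{L}\circ T,\textsbf{p}) = |\det T|^{-1}\BL_V(\textsbf{L},\textsbf{p})$, which the paper obtains by citing Lemma 3.3 of \cite{bcct} while you derive it directly from the defining inequality. The slightly more careful treatment of the Riemannian Jacobian $J\phi(x)=\sqrt{\det(d\phi(x)^*d\phi(x))}$ in place of the paper's shorthand $\det(d\phi(x))$ is a welcome clarification but not a substantive change.
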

\begin{proof}
    By the chain rule and Lemma 3.3 of \cite{bcct}, for almost every $x\in M$, $$\BL_{T_x\widetilde{M}}(\textsbf{d}\widetilde{\textsbf{B}}(x),\textbf{p})=\BL_{T_{\phi(x)}M}(\textsbf{dB}(\phi(x))d\phi(x),\textsbf{p})=\BL_{T_{\phi(x)}M}(\textsbf{dB}(\phi(x)),\textsbf{p})\det(d\phi(x))^{-1}.$$ Hence, by changing variables we obtain that 
    \begin{align*}
        \int_{\widetilde{M}}\prod_{j=1}^mf_j\circ \widetilde{B}_j(x)^{p_j}\frac{dx}{\BL_{T_x\widetilde{M}}(\textsbf{d}\widetilde{\textsbf{B}}(x),\textsbf{p})}&=\int_{\widetilde{M}}\prod_{j=1}^mf_j\circ \widetilde{B}_j(x)^{p_j}\frac{\det(d\phi(x))dx}{\BL_{T_{\phi(x)}M}(\textsbf{d}\textsbf{B}(\phi(x)),\textsbf{p})}\\
        &=\int_M\prod_{j=1}^mf_j\circ B_j(x)^{p_j}\frac{dx}{\BL_{T_x M}(\textsbf{dB}(x),\textsbf{p})}.
    \end{align*}
\end{proof}

In light of Proposition \ref{difinv}, one may extend Theorem \ref{algbras} to any $m$-tuple of maps $(B_j)_{j=1}^m$ that may each be written as a composition of a quasialgebraic map with a common diffeomorphism $\phi$, however we shall leave this as a remark.
 The proof strategy for Theorem \ref{genalgbras} will be to appeal to a generalised endpoint multilinear curvilinear Kakeya inequality, which we will view as a discrete version of (\ref{eq:algbras}), and run a limiting argument in order to recover the full inequality.

\subsection{Endpoint Multilinear Kakeya-type Inequalities}\label{kakeya}

The tools we will be using in this proof trace their lineage back to the endpoint multilinear Kakeya inequality, conjectured by Bennett, Carbery, and Tao in \cite{bct}, later proved by Guth in \cite{guth2010endpoint}.
\begin{Theorem}[Endpoint Multilinear Kakeya Inequality, Guth (2009)]\label{guthkak}
For each $1\leq j\leq n$, let $\mathcal{T}_j$ be a collection of straight doubly infinite tubes $T_j\subset\R^n$ of unit width. Denote the direction of a tube $T_j\in\mathcal{T}_j$ by $e(T_j)$, and suppose that there exists $\theta>0$ such that, for any configuration of tubes $(T_1,...,T_n)\in\mathcal{T}_1\times...\times\mathcal{T}_n$, we have the uniform transversality bound $|\bigwedge_{j=1}^n e(T_j)|>\theta$, then the following inequality holds:
\begin{align}
    \int_{\R^n}\left(\prod_{j=1}^n\sum_{T_j\in\mathcal{T}_j}\chi_{T_j}\right)^{\frac{1}{n-1}}dx\lesssim \theta^{-\frac{1}{n-1}}\prod_{j=1}^n\left(\#\mathcal{T}\right)^{\frac{1}{n-1}}\label{eq:guthkak}
\end{align}
\end{Theorem}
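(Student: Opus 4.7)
The plan is to prove \eqref{eq:guthkak} via the polynomial method of Guth. First, by rescaling and a standard discretization argument, one reduces to the setting where all tubes pass through a fixed ball of radius comparable to $1$ and each $\mathcal{T}_j$ is finite; the inequality then becomes an overlap-counting estimate on a compact region, and by a pigeonholing / level-set argument it suffices to bound the measure of the set where each factor $\sum_{T_j\in\mathcal{T}_j}\chi_{T_j}$ is comparable to a dyadic value.

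The central algebraic tool is the polynomial ham-sandwich theorem of Stone--Tukey: any $N$ integrable functions on $\R^n$ can be simultaneously bisected by the zero set of a nonzero polynomial of degree $\lesssim N^{1/n}$. Applying this to (suitable weighted) tube densities yields an algebraic hypersurface $Z=Z(p)$ whose complement decomposes into open cells, each meeting a controlled fraction of the tubes from each family $\mathcal{T}_j$.

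The decisive geometric input is a visibility estimate derived from B\'ezout's theorem: for any tube $T_j\in\mathcal{T}_j$ whose direction $e(T_j)$ participates in a configuration with $|\bigwedge_{k=1}^n e(T_k)|>\theta$, the restriction of $p$ to the core line of $T_j$ is not identically zero, and hence has at most $\deg(p)$ real roots. Consequently $T_j$ crosses at most $\deg(p)+1$ of the cells, and the transversality parameter $\theta$ enters only through control of the measure of the ``algebraic'' portion of $T_j$, namely the part lying in an $O(1)$-neighbourhood of $Z(p)$.

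One then iterates the cellular decomposition inside each cell, bounding the multilinear Kakeya quantity by induction on the total tube count. The critical exponent $\frac{1}{n-1}$ is precisely what makes the recursion close without logarithmic loss: the gain from having fewer tubes per cell is exactly matched by the combinatorial cost of summing over cells. Securing this balance, together with correctly handling the contributions of tubes that spend an appreciable length near $Z(p)$ versus those that cleanly pass through the cells, is the main obstacle; the transversality factor is extracted entirely from the visibility bound, producing the claimed $\theta^{-1/(n-1)}$ prefactor.
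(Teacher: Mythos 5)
The paper does not prove Theorem~\ref{guthkak} at all: it is cited as Guth's result, with the remark that ``the proof of this theorem relies heavily on sophisticated techniques from algebraic topology,'' and it serves only as motivating background for Theorems~\ref{CarbVald}, \ref{zhang} and \ref{zorin}, which are what the paper actually uses. So there is no internal proof to compare yours against; the relevant comparison is with Guth's original argument and its relatives.

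Your sketch is recognizably ``polynomial method,'' but it does not describe Guth's endpoint proof, and more importantly it has a genuine gap precisely at the endpoint. Guth's 2010 argument is a \emph{single-shot} construction, not an iterated cell decomposition with induction on tube count: the polynomial ham sandwich theorem (proved via Borsuk--Ulam) is used once to produce a degree-controlled polynomial $p$ whose zero set has large \emph{visibility} (directed volume) in every cube where the multilinear Kakeya density is large; B\'ezout bounds this visibility from above in terms of $\deg p$ and the tube counts, and the two bounds are played off against each other. There is no recursion into cells. What you describe --- partition, induct on tubes per cell, and claim ``the exponent $\tfrac{1}{n-1}$ is precisely what makes the recursion close without logarithmic loss'' --- is the template of polynomial partitioning as used for incidence theorems and for Guth's later \emph{short} proof of multilinear Kakeya, and that route is known to produce $\varepsilon$- or logarithmic losses rather than the endpoint. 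The obstruction is exactly the one you wave at in your last sentence: tubes that lie inside an $O(1)$-neighbourhood of $Z(p)$ (the ``wall'') do not distribute nicely among cells, and at the endpoint exponent their contribution cannot be absorbed by the induction --- one has to extract quantitative transversality information from $Z(p)$ itself, which is where the directed-volume / algebraic-topology input enters. Without a concrete mechanism for the wall term, the recursion you propose does not close without loss, so as written the proposal does not establish \eqref{eq:guthkak}. If you want an elementary-looking proof of the endpoint, the Carbery--Valdimarsson argument (Theorem~\ref{CarbVald} here) via Borsuk--Ulam is the cleanest, but it too avoids cell-by-cell induction entirely.
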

Remarkably, the proof of this theorem relies heavily on sophisticated techniques from algebraic topology. 
If we suppose that each $T_j\in\mathcal{T}_j$ is parallel to the $j$-th axis, then we may interpret the tubes $T_j$ as preimages of balls $V_j\subset\R^{n-1}$ under the projection $\pi_j$ onto the orthogonal complement of the $j$-th coordinate axis, as such we may write $\sum_{T_j\in\mathcal{T}_j}\chi_{T_j}=\sum_{V_j\in\mathcal{V}_j}\chi_{V_j}\circ\pi_j$ for some collection $\mathcal{V}_j$ of unit balls $V_j$ in $\R^{n-1}$, from which we recover the Loomis-Whitney inequality via rescaling and applying a standard density argument.\par
Motivated by seeking a more simple proof of this theorem, Carbery and Valdimarsson later established the following affine-invariant generalisation via the Borsuk--Ulam theorem \cite{carbery2013endpoint}.
\begin{Theorem}[Affine-invariant Multilinear Kakeya, Carbery-Valdimarsson (2013) \cite{carbery2013endpoint}]\label{CarbVald}
Let $1\leq m\leq n$. For each $1\leq j\leq m$, let $\mathcal{T}_j$ be a collection of straight doubly infinite tubes $T_j$ of unit width. Then, the following inequality holds:
\begin{align}
    \int_{\R^n}\left(\sum_{(T_1,...,T_m)\in\mathcal{T}_1\times...\times\mathcal{T}_m}|\bigwedge_{j=1}^m e(T_j)|\chi_{T_1\cap...\cap T_m}\right)^{\frac{1}{m-1}}dx\lesssim\prod_{j=1}^m\left(\#\mathcal{T}\right)^{\frac{1}{m-1}}\label{eq:CarbVald}
\end{align}
\end{Theorem}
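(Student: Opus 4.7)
The plan is to follow Carbery and Valdimarsson's approach and derive this affine-invariant strengthening of Guth's Theorem \ref{guthkak} via a topological argument. After the standard discretization at scale $\delta$, in which $\R^n$ is replaced by a $\delta$-grid and each tube by a union of unit cubes it meets, the integral on the left reduces to a weighted sum over unit cubes and the inequality becomes a combinatorial estimate on weighted counts of tube tuples passing through a given cube.

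The central idea is to exploit the affine invariance of both sides to place a chosen $m$-tuple of tube directions into a standard transverse position, in which the wedge factor $|\bigwedge_j e(T_j)|$ emerges as the Jacobian of the aligning linear change of variables. To select such a tuple one sets up a continuous odd map from a sphere (or product of spheres) into a Euclidean space whose components encode `balance' properties of the tube configuration, e.g.\ differences of tube counts on either side of a family of candidate hyperplanes, and applies the multi-sphere Borsuk--Ulam theorem to produce a point at which all these balances vanish simultaneously. The resulting distinguished direction tuple $(e_1,\dots,e_m)$ feeds into an induction on $\sum_j \#\mathcal{T}_j$: each $\mathcal{T}_j$ is split along a hyperplane normal to $e_j$, the inductive hypothesis is applied on each of the $2^m$ resulting sub-configurations, and the wedge weight on the left matches the volume distortion of the aligning change of variables exactly.

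The main obstacle is controlling the cross-terms in this induction, where a tube from one half of $\mathcal{T}_j$ meets a tube from the opposite half of $\mathcal{T}_k$; it is precisely the wedge weighting that suppresses those configurations responsible for bad cross-interactions, so the argument hinges on a careful orthogonality bookkeeping linking cross-term contributions to the wedge factor. As an alternative, one could bucket each $\mathcal{T}_j$ by direction into spherical caps at a dyadic sequence of angular scales, apply Theorem \ref{guthkak} with transversality $\theta\simeq|\bigwedge_j e(T_j)|$ on each scale, and exploit the algebraic cancellation $\theta^{1/(m-1)}\cdot\theta^{-1/(m-1)}=1$ to absorb the wedge weight against Guth's transversality cost; the difficulty then is to organise the dyadic sum by the wedge value itself rather than by individual cap tuples, so as to avoid accumulating logarithmic losses in the number of caps.
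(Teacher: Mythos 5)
The paper does not itself prove this theorem; it is imported as a black box from \cite{carbery2013endpoint}. Measured against that reference, you correctly identify Borsuk--Ulam as the topological engine and an initial discretisation to cubes, but your proposal misplaces the central mechanism. In Carbery and Valdimarsson's argument, Borsuk--Ulam is applied to the unit sphere of a finite-dimensional space of \emph{polynomials} of degree at most $D$, not to a sphere of candidate tube directions or bisecting hyperplanes. The odd map records, cube by cube, the signed weighted measure of the two sides of the polynomial's zero set; its vanishing point is a single polynomial $p$ of degree $D\lesssim(\sum_Q f(Q))^{1/n}$ whose zero set $Z(p)$ has large \emph{directed volume} (an affine-invariant surface quantity built precisely from the wedge factor $|\bigwedge_j e(T_j)|$) inside every unit cube $Q$, where $f(Q)$ is the cube-localised left-hand integrand raised to the power $1/(m-1)$. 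A tube in direction $e$ then passes through at most $O(D)$ cubes in which $Z(p)$ has substantial directed volume in direction $e$ --- a B\'ezout-type count --- and this closes the estimate. There is no induction on $\sum_j\#\mathcal{T}_j$, no distinguished direction tuple supplied by Borsuk--Ulam, and no hyperplane-splitting of the tube families. The cross-term difficulty you flag in your first route is not a bookkeeping loose end; it is a symptom that the route lacks the polynomial zero set as its organising object. The wedge weight is a property of an $m$-tuple of tubes, so splitting each $\mathcal{T}_j$ independently gives you no purchase on it.

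Your dyadic alternative fails more than logarithmically. Since $1/(m-1)\leq 1$, splitting the tuple sum into buckets and using $(\sum_k S_k)^{1/(m-1)}\leq\sum_k S_k^{1/(m-1)}$ followed by Theorem~\ref{guthkak} bucket-by-bucket produces $\prod_j\sum_{C_j}(\#\mathcal{T}_j^{(C_j)})^{1/(m-1)}$ on the right, which for tubes spread uniformly over $N$ caps exceeds the target $\prod_j(\#\mathcal{T}_j)^{1/(m-1)}$ by a factor of order $N^{(m-2)/(m-1)}$ per family, a polynomial loss for $m\geq3$. Organising instead by wedge value does not resolve this, because a fixed tube participates in tuples of every wedge size, so no partition of the $\mathcal{T}_j$'s is induced. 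The affine-invariant weight has to be built into the topological step itself, which is exactly what the directed-volume polynomial ham-sandwich construction accomplishes.
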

If we may uniformly bound the weight $|\bigwedge_{j=1}^m e(T_j)|$ below by some $\theta>0$, then this will allow us to factorise the integrand on the left-hand side of $(\ref{eq:CarbVald})$ in such a manner that we then recover Theorem \ref{guthkak}.
Zhang offers a generalisation of Theorem \ref{CarbVald} in \cite{zhang2017endpoint}, where, essentially, the tubes $T_j$ are replaced with tubular neighbourhoods of algebraic varieties. An early example of curvilinear variants of Kakeya inequalities of this kind is offered by Bourgain and Guth in \cite{bourgain2011bounds}, where they prove a trilinear inequality for algebraic curves ($1$-dimensional algebraic varieties) in $\R^4$.
\begin{Theorem}[Bourgain-Guth 2011 \cite{bourgain2011bounds}]\label{bourgainguth}
Suppose that $\Gamma_i\subset\R^4$ is an algebraic curve restricted to the unit 4-ball with degree $\lesssim 1$ and
$C^2$ norm $\lesssim 1$. Let $T_i$ denote the $\delta$-neighborhood of an algebraic curve $\Gamma_i$ and let $\mathcal{T}$ be an arbitrary finite set of such $T_i$. Define approximate tangent
vectors $v_i(x)$ for $x\in T_i\in\mathcal{T}$. The
following estimate holds:
\begin{align}
    \int_{U_1(0)}\left(\sum_{(T_i,T_j,T_k)\in\mathcal{T}^3}|v_i(x) \wedge v_j(x) \wedge v_k(x)|\chi_{T_i\cap T_j\cap T_k}(x)\right)^{\frac{1}{2}}dx\lesssim \delta^4(\#\mathcal{T})^{\frac{3}{2}} \label{eq: bourgainguth}
\end{align}

\end{Theorem}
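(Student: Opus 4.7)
The plan is to establish (\ref{eq: bourgainguth}) via the polynomial partitioning paradigm that Guth introduced for the straight-line endpoint multilinear Kakeya inequality, supplemented by B\'ezout-type intersection bounds to accommodate the algebraic curves $\Gamma_i$. I would argue by induction on $M := \#\mathcal{T}$, with the base case (a single tube, or $\delta$ comparable to $1$) trivial from direct volume estimates.

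For the inductive step, I would invoke the polynomial ham sandwich theorem of Guth-Katz to produce a polynomial $P$ of degree at most $D$, with $D$ a large constant to be chosen, such that the hypersurface $Z(P)$ partitions $\mathbb{R}^4 \setminus Z(P)$ into $\sim D^4$ open cells $\Omega_\alpha$ each supporting a roughly equal share of the integrand on the left-hand side of (\ref{eq: bourgainguth}). Since each $\Gamma_i$ has degree $\lesssim 1$, B\'ezout's theorem bounds by $\lesssim D$ the number of cells meeting any given tube $T_i$, so the total tube-cell incidence count is $\lesssim DM$ and we may arrange that each cell is visited by $\lesssim M/D^3$ tubes. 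Applying the inductive hypothesis inside each cell and summing yields
\begin{equation*}
\int_{\bigcup_\alpha \Omega_\alpha}\left(\sum_{(T_i,T_j,T_k)\in\mathcal{T}^3}|v_i\wedge v_j\wedge v_k|\chi_{T_i\cap T_j\cap T_k}\right)^{1/2}dx \lesssim D^4\cdot\delta^4(M/D^3)^{3/2} \sim D^{-1/2}\delta^4M^{3/2},
\end{equation*}
strictly smaller than the target $\delta^4 M^{3/2}$ once $D$ is chosen large enough to absorb the implicit constant, closing the cellular part of the induction.

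The main obstacle is the wall contribution, that is, the integral over a $\delta$-neighborhood $W$ of $Z(P)$. Wongkew's estimate bounds the volume of $W \cap U_1(0)$ by $\lesssim D\delta$, but this alone does not suffice. The key geometric input is that any tube $T_i$ concentrating substantially on $W$ must trace a curve $\Gamma_i$ nearly tangent to $Z(P)$, so its approximate tangent vectors $v_i(x)$ lie close to the three-dimensional subspace $T_xZ(P)$; the wedge $|v_i(x)\wedge v_j(x)\wedge v_k(x)|$ is then correspondingly small pointwise on $W$, furnishing a transversality gain that is absent in the cellular analysis. To complete the argument, I would either iterate the polynomial partitioning on a neighborhood of $Z(P)$ at a finer scale (running a secondary induction on scales), or translate the problem on $W$ into the lower-dimensional analogue of the inequality for curves constrained to the degree-$D$ variety $Z(P)$. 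Balancing the cellular and wall contributions by tuning $D$ then yields the claimed bound $\lesssim \delta^4(\#\mathcal{T})^{3/2}$; the wall analysis is the step I expect to be most delicate, as it is where the curvilinear setting departs most sharply from the straight-line multilinear Kakeya inequality and where the $C^2$ hypothesis on the curves is genuinely used.
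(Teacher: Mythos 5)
This theorem is stated in the paper as background and is cited directly from Bourgain and Guth \cite{bourgain2011bounds}; the paper does not supply its own proof, so there is no in-paper argument to compare yours against. I can nonetheless comment on the viability of the approach you sketch, and there is a genuine gap in it.

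The decisive problem is your treatment of the wall. You claim that if a tube $T_i$ concentrates on the $\delta$-neighbourhood $W$ of $Z(P)$, then $v_i(x)$ lies close to the three-dimensional subspace $T_xZ(P)$, and that this forces $|v_i(x)\wedge v_j(x)\wedge v_k(x)|$ to be small. That inference is false: the hypersurface $Z(P)\subset\R^4$ has a three-dimensional tangent space, and three unit vectors lying inside a common three-dimensional subspace of $\R^4$ can perfectly well span a $3$-parallelepiped of volume comparable to $1$. (Contrast this with $n$ tubes in $\R^n$, where near-tangency to a degree-$D$ hypersurface forces all $n$ directions into an $(n-1)$-plane and hence collapses the full $n$-fold wedge --- that is the mechanism in Guth's endpoint multilinear Kakeya. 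It does not transfer to a trilinear wedge in $\R^4$.) Without this ``transversality gain,'' the volume bound $|W\cap U_1(0)|\lesssim D\delta$ alone leaves the wall contribution comparable to, or larger than, the target, and the induction does not close. Your fallback suggestions --- iterating the partition at finer scales, or restricting to $Z(P)$ and inducting on dimension --- are exactly where the real work would have to go, but as written they are placeholders rather than arguments, and the one concrete claim you make to justify the wall term is incorrect.

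Two further, smaller issues: the polynomial ham sandwich theorem equidistributes the integrand across cells, not the tube counts, so ``we may arrange that each cell is visited by $\lesssim M/D^3$ tubes'' overstates what the partition gives you (the standard workaround --- comparing the equidistributed integral to the inductive bound cell by cell and summing --- happens to produce the same $D^{-1/2}$ gain, but it needs to be said correctly); and the inductive hypothesis is stated on $U_1(0)$ while the cells $\Omega_\alpha$ are arbitrary open sets, so the statement you induct on would need to be made domain-flexible before the cellular step is licit. Finally, as a matter of record, the method Guth actually uses in the appendix to \cite{bourgain2011bounds} is the polynomial ham sandwich / directed-volume (``visibility'') argument from his endpoint multilinear Kakeya proof adapted to curves, not cells-and-walls polynomial partitioning in the Guth--Katz sense, so even with the gap filled you would be constructing a genuinely different proof rather than reproducing theirs.
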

There are higher-dimensional generalisations of this inequality due to Zhang and Zorin-Kranich, but before we state them, we remark that any higher-dimensional analogue of (\ref{eq: bourgainguth}) must involve some suitable generalisation of the wedge term in the integrand that tracks the transversality of the varieties in a similar manner. One such generalisation involves a weight that takes the form of a `wedge product' of the tangent spaces of the varieties, which we shall now define.
\begin{definition}
Let $W_1,...,W_m$ be a collection of subspaces of $\mathbb{R}^n$, and for each $W_j$ choose an orthonormal basis $w_1^j,..., w_{k_j}^j$. Observing that the $\sum_{j=1}^mk_j$-dimensional volume of the parallelepiped generated by the union of these bases, given by $|\bigwedge_{j=1}^m\bigwedge_{i=1}^{k_i}w_i^j|$, does not depend on the choice of bases, we denote this quantity by $|\bigwedge_{j=1}^mW_j|$.
\end{definition}
\begin{Theorem}[$k_j$-variety theorem, Zhang 2015 \cite{zhang2017endpoint}]\label{zhang}
Assume that $\sum_{j=1}^mk_j=n$. For each $j\in\{1,...,m\}$, let $H_j$ be an open subset of a $k_j$-dimensional algebraic subvariety in $\mathbb{R}^n$, and let $\sigma_j$ denote the $k_j$-dimensional Hausdorff measure on $H_j$, then,
\begin{align}
    \int_{\mathbb{R}^n}\left(\int_{H_1\cap U_1(x)\times...\times H_m\cap U_1(x)}|\bigwedge_{i=1}^mT_{y_j}H_j|d\sigma_1(y_1)...d\sigma_m(y_m)\right)^{\frac{1}{m-1}}dx\lesssim\prod_{j=1}^m\deg(H_j)^{\frac{1}{m-1}}\label{eq:zhang}
\end{align}
\end{Theorem}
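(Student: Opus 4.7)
The plan is to follow the polynomial method framework initiated by Guth in his proof of Theorem \ref{guthkak}, adapted to handle curvilinear tubes and the wedge-type weight. The overall structure is an induction argument whose engine is polynomial partitioning via the ham-sandwich theorem, with a clean cellular/algebraic dichotomy at each step. The role of the weight $|\bigwedge_j T_{y_j}H_j|$ is to make the inequality affine-invariant and, crucially, to make the induction close when one restricts to a lower-dimensional variety in the algebraic case.

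First I would discretize: fix a small scale $\delta>0$ and replace each $H_j$ by a union of $\delta$-neighbourhoods of its pieces, reducing \eqref{eq:zhang} to a discrete inequality about how many $\delta$-neighbourhoods of bounded-degree varieties meet a given $\delta$-ball, with the wedge weight evaluated at a representative tangent plane. A standard covering/limiting argument as $\delta\to 0$ recovers \eqref{eq:zhang} from the discrete version, so from here on the problem is combinatorial-algebraic. I would then induct on a complexity parameter such as $n$ together with $\sum_j \deg(H_j)$, the base case being when $n=m$, each $k_j=1$ and each $H_j$ is affine, which reduces to Theorem \ref{CarbVald} of Carbery--Valdimarsson.

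For the inductive step, apply the polynomial ham-sandwich theorem to produce a polynomial $P$ of degree $D$ (to be optimised) whose zero set $Z(P)$ partitions $\mathbb{R}^n$ into $\sim D^n$ open cells, on each of which the integrand in \eqref{eq:zhang} contributes roughly the same amount. There are then two regimes. In the \emph{cellular} regime, most of the mass lives in the union of cell interiors; B\'ezout's theorem bounds the number of connected components that each $H_j$ contributes to the cells by $\lesssim D \cdot \deg(H_j)$, so each cell sees a configuration of varieties whose total degree is reduced compared to the original, and the inductive hypothesis applies on each cell with the gains summing correctly to the right-hand side of \eqref{eq:zhang} after optimising $D$. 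In the \emph{algebraic} regime, most of the mass is concentrated in a thin neighbourhood of $Z(P)$; here I would restrict the problem to $Z(P)$, a variety of dimension $n-1$, and apply the inductive hypothesis in smaller ambient dimension, using a transverse slicing argument (or coarea) to reduce the $n$-dimensional integral to integrals over the hypersurface $Z(P)$.

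The hard part will be the algebraic case, specifically controlling the weight $|\bigwedge_{j=1}^m T_{y_j}H_j|$ after restriction to $Z(P)$. Each tangent space $T_{y_j}H_j$ must be replaced by a tangent space to a lower-dimensional variety (the intersection of $H_j$ with $Z(P)$, or a projection of $H_j$ along the normal to $Z(P)$), and one needs a pointwise bound comparing the original wedge with the analogous wedge on $Z(P)$ multiplied by a factor that telescopes correctly in the induction; this hinges on a quantitative version of the fact that if $y_j$ lies in a thin neighbourhood of $Z(P)$, then a definite portion of $T_{y_j}H_j$ must lie close to $T_{y_j}Z(P)$, which in turn uses that $H_j$ has bounded degree. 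Once this pointwise weight-transfer is established, the induction closes cleanly upon optimising $D \sim \left(\sum_j \deg(H_j)\right)^{1/n}$, and the constant picks up the product $\prod_j \deg(H_j)^{1/(m-1)}$ predicted by \eqref{eq:zhang}.
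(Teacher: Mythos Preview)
The paper does not prove Theorem~\ref{zhang} at all: it is quoted as a result of Zhang \cite{zhang2015endpoint} and is used as a black box (in fact, the paper ultimately appeals to the more general Theorem~\ref{zorin} of Zorin-Kranich). So there is no ``paper's own proof'' to compare your proposal against; you have written a sketch of how one might reprove Zhang's theorem, whereas the paper simply imports it.

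As a sketch of Zhang's argument, your outline is in the right spirit but has real gaps. The base case is misidentified: taking $n=m$ and each $k_j=1$ does not force the $H_j$ to be affine lines, so Theorem~\ref{CarbVald} does not apply directly; one still has curved $1$-dimensional varieties even in the bottom case. In the cellular regime, the count ``$\lesssim D\cdot\deg(H_j)$'' for the number of pieces of $H_j$ in the cells is not what B\'ezout gives for a $k_j$-dimensional variety cut by a degree-$D$ hypersurface; the correct counts scale differently with $k_j$, and getting these exponents to close under the $\tfrac{1}{m-1}$ power is exactly the delicate numerology that drives the argument. Most seriously, in the algebraic case your ``restrict to $Z(P)$ and induct on ambient dimension'' step does not preserve the structural hypothesis $\sum_j k_j=n$: intersecting each $H_j$ with $Z(P)$ drops some of the $k_j$ by one while the ambient dimension drops by one, and you have not said which $j$ loses the dimension or how the wedge weight transforms so that the inductive inequality on $Z(P)$ dominates the original. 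Zhang's actual proof handles this via a more elaborate mechanism (a visibility/broad--narrow analysis rather than a plain cellular/algebraic dichotomy), precisely because the naive restriction-and-induct step does not close.
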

While at first glance this inequality appears to have a very different form to (\ref{eq:CarbVald}) and (\ref{eq: bourgainguth}), one may view the inner integral as a weighted bump function supported in the intersection of the unit neighborhoods of the varieties $H_1,...,H_m$, where this weight is a higher-dimensional generalisation of the wedge of tangent vectors arising in (\ref{eq: bourgainguth}). We should remark that, in the same paper, Zhang does prove a stronger theorem than the above that accounts for more general configurations of dimensions and exponents, wherein the weight explicitly takes the form of a Brascamp--Lieb constant. Later, Zorin-Kranich devised a reformulation of this generalised theorem that makes use of Fremlin tensor product norms, and this is the version we shall be using to prove Theorem \ref{algbras}.
\begin{definition}
Given measure spaces $X_1,...,X_m$ and $p_j\in[1,\infty]$, define the Fremlin tensor product norm $\Vert F\Vert_{\widebar{\bigotimes}_{j=1}^mL^{p_j}(X_j)}$ on $\bigotimes_{j=1}^mL^{p_j}(X_j)$ by
$$\Vert F\Vert_{\widebar{\bigotimes}_{j=1}^mL^{p_j}(X_j)}:=\inf\left\lbrace\prod_{j=1}^m\Vert F_j\Vert_{L^{p_j}(X_j)}: F_j\in L^{p_j}(X_j), |F|\leq |F_1|\otimes...\otimes |F_m| \right\rbrace$$
\end{definition}
Zorin-Kranich also makes use of a non-standard regime for defining Brascamp--Lieb inequalities that takes, as data, collections of subspaces as opposed to linear maps, one that we shall now define. Given a collection of subspaces $W_1,...,W_m\leq\R^n$ such that $\dim(W_j)=k_j$, with a corresponding collection of exponents $p_1,...,p_m>0$, the associated `Brascamp--Lieb inequality' is defined as follows over all $f_j\in L^1(\R^n/W_j)$:
\begin{align}
    \int_{\R^n}\prod_{j=1}^mf_j(x+W_j)^{p_j}dx\leq C\prod_{j=1}^m\left(\int_{\R^n/W_j}f_j\right)^{p_j}
\end{align}
Following the notation of \cite{zorin2019Kakeya}, we then write $\overrightarrow{W_j}=(W_1,...,W_m)$, $\textsbf{p}:=(p_1,...,p_m)$, and denote the best constant $C>0$ in the above inequality by $BL'(\overrightarrow{W_j},\textsbf{p})$. In his paper, Zorin-Kranich makes use of local versions of the Brascamp-Lieb constants, which allows for exponents to lie outside of the polytope defined by the scaling condition $\sum_{j=1}^mp_jn_j=n$. We shall however state a version of Zorin-Kranich's theorem that assumes such a scaling condition, but nonetheless is more general than Theorem \ref{zhang}.
\begin{Theorem}[Zorin-Kranich 2017 \cite{zorin2019Kakeya}]\label{zorin}
Let $\mathcal{Q}$ be a decomposition of $\R^n$ into unit cubes and for each $1\leq j\leq m$, let $H_j\subset\R^n$ be an open subset of a $k_j$-dimensional algebraic variety and $p_j\in[0,1]$ be chosen such that $\sum_{j=1}^mp_j(n-k_j)=n$. Suppose that $P:=\sum_{j=1}^m p_j \geq 1$, then the following inequality holds:
\begin{align}
    \sum_{Q\in\mathcal{Q}}\Vert BL'(\overrightarrow{T_{x_j}H_j},\textsbf{p})^{-\frac{1}{P}}\Vert^P_{\widebar{\bigotimes}_{j=1}^mL^{P/p_j}_{x_j}(H_j\cap Q)}\lesssim\prod_{j=1}^m\deg(H_j)^{p_j}\label{eq:zorin0}
\end{align}
\end{Theorem}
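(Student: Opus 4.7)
The plan is to prove Theorem \ref{zorin} by polynomial partitioning, following the template introduced by Guth for the endpoint multilinear Kakeya inequality and refined by Zhang \cite{zhang2015endpoint} in the algebraic setting, but now tracking the Fremlin tensor product norm on the left-hand side. First I would discretise: by a standard approximation, replace each $H_j$ with a union of small balls centred on a $\delta$-net $X_j \subset H_j$, weighted through the Brascamp--Lieb constant associated to the collection of tangent spaces, and reduce (\ref{eq:zorin0}) to an estimate on finite multilinear sums of the form $\sum_Q \Vert BL'(\overrightarrow{T_{x_j}H_j},\textsbf{p})^{-1/P}\Vert^P_{\widebar{\bigotimes}_{j=1}^m \ell^{P/p_j}_{x_j}(X_j\cap Q)}$. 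The Fremlin norm then unfolds into the natural multilinear object on which polynomial partitioning can be made to act.

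Next, with the $H_j$ represented by the discrete point sets $X_j$ together with their tangent data, I would run a double induction on the total weighted degree $D := \prod_j \deg(H_j)^{p_j}$ and on the ambient dimension $n$. For the degree induction step, apply the polynomial ham-sandwich theorem of Stone and Tukey to select a nonzero polynomial $R$ of degree $\lesssim D^{1/n}$ whose zero set $Z(R)$ simultaneously equipartitions the relevant Fremlin-norm mass across its complementary cells. This produces $\lesssim D$ open cells $\Omega$ with $\bigcup_\Omega \Omega = \R^n \setminus Z(R)$. In each cell, B\'ezout's theorem bounds $\deg(H_j \cap \Omega) \leq \deg(H_j)\cdot\deg(R)$, so that the total weighted degree in $\Omega$ is strictly smaller than $D$; applying the inductive hypothesis cellwise and summing over cells closes the cellular contribution, provided the hypothesis $P \geq 1$ lets us pass the sum inside the Fremlin norm via the embedding $\ell^{P/p_j}_Q \hookrightarrow \ell^1_Q \ell^{P/p_j}$.

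The contribution supported on a small tubular neighbourhood of $Z(R)$ is handled by the second induction, on dimension. Since $Z(R)$ is an algebraic hypersurface, after a suitable slab decomposition one may reduce this piece to an $(n-1)$-dimensional instance of (\ref{eq:zorin0}) with varieties of the form $H_j \cap Z(R)$ of comparable degree. The main subtlety here is that each $H_j$ must be split into its transversal and tangential parts relative to $Z(R)$: the tangential part is reabsorbed into the cellular term (or handled by a local transversality estimate comparing the tangent space $T_{x_j}H_j$ with $T_{x_j}Z(R)$), while the transversal part is fed into the lower-dimensional inductive hypothesis, with the induced exponents $(k_j,p_j)$ still satisfying the required scaling condition.

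The main obstacle I anticipate is the interplay between the Fremlin tensor product norm and the polynomial partitioning. Ordinary Kakeya partitioning closes because $\chi_T$ decomposes as a sum over tubes, but the left-hand side of (\ref{eq:zorin0}) is a norm rather than a sum; thus one must verify that, when applying Stone--Tukey, the polynomial $R$ can be chosen so that the Fremlin norm of the restriction to each cell is controllably smaller than the global norm, which requires applying ham-sandwich to measures engineered from the decoupled factorisations witnessing the Fremlin infimum. Equally delicate is ensuring that the nonlinear tangent-space weight $BL'(\overrightarrow{T_{x_j}H_j},\textsbf{p})^{-1/P}$, which depends on the configuration of tangent spaces at $m$ distinct points $x_1,\dots,x_m$ simultaneously, transforms correctly under both the cellular decomposition and the dimensional reduction --- this genuinely multilinear dependence is what distinguishes the argument from the simpler pointwise wedge-weight versions in Theorems \ref{CarbVald} and \ref{zhang}, and is the principal technical reason one passes to Fremlin tensor products in the first place.
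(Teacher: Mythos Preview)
The paper does not prove Theorem \ref{zorin}; it is quoted directly from Zorin-Kranich \cite{zorin2018Kakeya} and used as a black box. The only derivation the paper carries out around this statement is the one-line passage from the discrete form (\ref{eq:zorin0}) to the integral form (\ref{eq:zorin}), by averaging over axis-parallel translates of the grid $\mathcal{Q}$ and invoking Lemma \ref{rescale}. There is therefore no ``paper's own proof'' to compare your proposal against.

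That said, your sketch is in the right spirit for how the result is actually established in \cite{zorin2018Kakeya}, via polynomial partitioning in the Guth--Zhang tradition, but a few points in your outline are off. The cellular step does not work by showing that $\deg(H_j\cap\Omega)$ decreases via B\'ezout; rather, one controls the number of cells each $H_j$ can enter (this is where the degree of the partitioning polynomial interacts with $\deg(H_j)$), and the induction is on the cardinality of the discretised point sets, not on the continuous degree parameter $D$. Your description of applying ham-sandwich to ``measures engineered from the decoupled factorisations witnessing the Fremlin infimum'' identifies the genuine difficulty but is not how it is resolved: in \cite{zorin2018Kakeya} the Fremlin norm is handled by first fixing near-optimal majorants $F_j$ and then partitioning with respect to the product measures they induce, rather than attempting to partition the norm itself. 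Finally, the tangential/transversal dichotomy on $Z(R)$ is correct in outline, but the scaling condition $\sum_j p_j(n-k_j)=n$ does not automatically persist after restriction to a hypersurface, which is part of why Zorin-Kranich works with a localised Brascamp--Lieb constant that tolerates exponents off the scaling polytope.
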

Consequently, averaging over all axis-parallel choices of $\mathcal{Q}$ and rescaling by a factor of $2$ via the forthcoming Lemma \ref{rescale}, we obtain the following inequality under the same conditions:
\begin{align}
\int_{\R^n}\Vert BL'(\overrightarrow{T_{x_j}H_j},\textsbf{p})^{-\frac{1}{P}}\Vert^P_{\widebar{\bigotimes}_{j=1}^mL^{P/p_j}_{x_j}(H_j\cap U_1(x))}dx\lesssim\prod_{j=1}^m\deg(H_j)^{p_j}.\label{eq:zorin}
\end{align}
This integral representation is the form we shall be using in this paper.
In analogy with the discussion following the statement of Theorems \ref{guthkak} and \ref{CarbVald}, it is natural that one should attempt to derive a Brascamp--Lieb inequality from Theorems \ref{zhang} or \ref{zorin} by formally running the same argument as in the linear case. However, in the presence of nonlinearity, tubular neighbourhoods of fibres cannot be written as preimages of balls, hence we cannot immediately run the same density argument as before. We therefore need to use a more detailed construction, where we cover these preimages by a union of many very thin tubular neighbourhoods of fibres, paying careful attention to how they overlap (see figure \ref{fig: 3}, section \ref{heuristic}).\par

\section{Setup for the proof of Theorem \ref{genalgbras}}\label{setup}

\subsection{Reductions}
We shall assume for the remainder of the paper without loss of generality that the maps $B_j$ have finite degree, since the case of infinite degree holds vacuously, and that $\BL_{T_xM}(\textsbf{dB}(x),\textsbf{p})<\infty$ for all $x\in M$, in particular that $B_j$ is a submersion on $M$. We may do this firstly because we may remove the set of non-smooth points harmlessly since it is closed and null, so $M$ is still an open subset of an algebraic variety, and secondly we may remove the set of smooth points at which the weight arising in (\ref{genalgbras}) vanishes, i.e. those $x\in M$ such that $\BL(\textsbf{dB}(x),\textsbf{p})=\infty$, since this set is closed by continuity of the reciprocal of the Brascamp-Lieb constant (Theorem 5.2 of \cite{bennett2020nonlinear}).\par
We shall begin by reducing to the case where $d=n$, i.e. where  $M$ is an open subset of $\R^n$.
We begin with a standard geometric lemma.
\begin{lemma}\label{sigmapprox}
    Let $N$ be an $(n-d)$-dimensional Riemannian manifold and let $\chi_{\delta}:N\rightarrow\R$ be the normalised characteristic function associated to the $\delta$-ball centred at some fixed $z_0\in N$, defined by $\chi_{\delta}(z):=\delta^{n-d}\chi_{U_{\delta}(z_0)}$. Given an open set $A\subset\R^n$ and a submersion $B:A\rightarrow N$, then for any continuous and integrable $f:A\rightarrow\R$ the following holds:
    $$\int_{A}f(x)\chi_{\delta}\circ B(x)dx\overset{\delta\rightarrow 0}{\longrightarrow}\int_{A\cap B^{-1}(\{z_0\})}f(x)\det(dB(x)dB(x)^*)^{-\frac{1}{2}}d\sigma(x),$$
    where $d\sigma$ denotes the induced $d$-dimensional Hausdorff measure.
\end{lemma}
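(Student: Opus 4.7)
The plan is to apply the coarea formula to the submersion $B$ and then recognise that $\chi_\delta\, d\mu_N$ is an approximate identity concentrating at $z_0$. Setting $J_B(x) := \det(dB(x)\,dB(x)^*)^{1/2}$ for the generalised Jacobian, the coarea formula rewrites the left-hand side as
\begin{align*}
\int_A f(x)\, \chi_\delta\circ B(x)\, dx = \int_N \chi_\delta(z)\, F(z)\, d\mu_N(z),
\end{align*}
where
$$F(z) := \int_{A\cap B^{-1}(z)} \frac{f(x)}{J_B(x)}\, d\sigma_z(x)$$
and $\sigma_z$ denotes the induced $d$-dimensional Hausdorff measure on the fibre $B^{-1}(z)$. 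Observing that $F(z_0)$ is exactly the right-hand side of the claimed limit, the problem reduces to showing $\int_N \chi_\delta F\, d\mu_N \to F(z_0)$ as $\delta\to 0$.

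The essential step is therefore to verify that $F$ is continuous at $z_0$, after which the limit follows from a standard approximation-of-the-identity argument: since $\chi_\delta$ is a non-negative normalised bump supported in $U_\delta(z_0)$, one has $\bigl|\int_N \chi_\delta F\, d\mu_N - F(z_0)\bigr| \leq \sup_{z\in U_\delta(z_0)} |F(z) - F(z_0)|$, which tends to zero. To establish continuity of $F$, I would invoke the submersion normal form at each point of $B^{-1}(z_0)$: in suitable local coordinates $B$ becomes a coordinate projection, so the nearby fibres appear as smooth graphs varying smoothly with $z$, and a change of variables in these coordinates expresses $F$ locally as the integral of a continuous function against a smoothly varying measure, hence continuous in $z$.

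The main obstacle I anticipate is promoting this local picture to the full, possibly non-compact, fibre $B^{-1}(z_0)$. I would address this by combining a partition of unity subordinate to a locally finite cover of $B^{-1}(z_0)$ by submersion-chart neighbourhoods with a dominated-convergence argument, truncating $f$ by compactly supported continuous functions and using the integrability of $f$ to control the tail uniformly in $\delta$. Once that tail control is in place, the lemma reduces to the elementary fact that $\chi_\delta\, d\mu_N$ weakly approximates the Dirac mass at $z_0$ against the continuous bounded function $F$.
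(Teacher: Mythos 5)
Your proposal is correct and follows essentially the same route as the paper: both apply the coarea formula to rewrite the left-hand side as $\int_N \chi_\delta F\,d\mu_N$ with $F(z):=\int_{B^{-1}(\{z\})}f/J_B\,d\sigma_z$, and then invoke the approximate-identity property of $\chi_\delta$ together with continuity of $F$ at $z_0$. The paper simply asserts this continuity without argument, whereas you sketch a justification via submersion charts, partitions of unity, and dominated convergence — a welcome amount of extra care, but not a different method.
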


We also require the following identity of Brascamp-Lieb constants, which may be regarded as a crude example of a Brascamp-Lieb constant splitting through a critical subspace, a phenomenon that was studied in its full generality in \cite{bcct}.
\begin{lemma}\label{BLconstants}
    Let $d,n,m\in\N$, $n_1,...,n_m\in\N$ and write $n_{m+1}=n-d$. For $1\leq j\leq m+1$, we consider linear surjections $L_j:\R^n\rightarrow \R^{n_j}$ such that, for $1\leq j\leq m$, $L_j$ restricts to a surjection on the subspace $V:=\ker(L_{m+1})$. Let $p_j\in[0,1]$ for $1\leq j\leq m$ and $p_{m+1}=1$, and assume that the scaling condition $\sum_{j=1}^{m+1}p_jn_j=n$ is satisfied. Let $\widetilde{\textsbf{L}}:=(L_j)_{j=1}^{m+1}$ and $\widetilde{\textsbf{p}}:=(p_j)_{j=1}^{m+1}$. Then, the scaling condition $d=\sum_{j=1}^mp_jn_j$ holds. Furthermore, if we let $\textsbf{L}:=(L\vert_V)_{j=1}^m$ and $\textsbf{p}:=(p_j)_{j=1}^m$, we then have the following identity:
    $$\BL(\widetilde{\textsbf{L}},\widetilde{\textsbf{p}})=\det(L_{m+1}L_{m+1}^*)^{-\frac{1}{2}}\BL(\textsbf{L},\textsbf{p}).$$
\end{lemma}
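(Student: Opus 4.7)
The plan is to establish the stated identity as two matching inequalities via a direct disintegration of Lebesgue measure on $\R^n$ induced by $L_{m+1}$, while dispatching the criticality claim by dimension count. Indeed, since $L_{m+1}|_V = 0$ and $L_j|_V$ is surjective onto $\R^{n_j}$ for $j \le m$ by hypothesis,
$$\sum_{j=1}^{m+1} p_j \dim(L_j V) = \sum_{j=1}^m p_j n_j = n - p_{m+1} n_{m+1} = d = \dim V,$$
where the scaling condition and $p_{m+1} = 1$ are used in the penultimate step.

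For the analytical content, I would orthogonally decompose $\R^n = V \oplus V^\perp$. The restriction $A := L_{m+1}|_{V^\perp} : V^\perp \to \R^{n_{m+1}}$ is a bijection, and a short calculation gives $A A^* = L_{m+1} L_{m+1}^*$, so the Jacobian of $A$ equals $\det(L_{m+1} L_{m+1}^*)^{1/2}$. Setting $x_z := A^{-1}(z)$ and applying Fubini with this change of variables yields
\begin{align*}
\int_{\R^n} \prod_{j=1}^{m+1} (f_j \circ L_j)^{p_j} \, dx = \det(L_{m+1} L_{m+1}^*)^{-\frac{1}{2}} \int_{\R^{n_{m+1}}} f_{m+1}(z) \, G(z) \, dz,
\end{align*}
where $G(z) := \int_V \prod_{j=1}^m f_j(L_j v + L_j x_z)^{p_j} \, dv$. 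Translation invariance of Lebesgue measure on each $\R^{n_j}$ and the BL inequality for $(\textsbf{L}, \textsbf{p})$ on $V$ give the uniform bound $G(z) \le \BL(\textsbf{L}, \textsbf{p}) \prod_{j=1}^m (\int f_j)^{p_j}$, which immediately produces $\BL(\widetilde{\textsbf{L}}, \widetilde{\textsbf{p}}) \le \det(L_{m+1} L_{m+1}^*)^{-1/2} \BL(\textsbf{L}, \textsbf{p})$.

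For the matching lower bound, I would test with near-extremisers $f_1, \ldots, f_m \in C_c(\R^{n_j})$ for the BL inequality on $V$ and take $f_{m+1}$ to be an $L^1$-normalised bump concentrated near $z = 0$. Continuity of $z \mapsto x_z$ combined with dominated convergence ensures that $G$ is continuous at $0$ with $G(0) = \int_V \prod_j f_j(L_j v)^{p_j} \, dv$, so as the bump-width tends to zero the displayed identity gives $\int_{\R^n} \prod (f_j \circ L_j)^{p_j} \, dx \to \det(L_{m+1} L_{m+1}^*)^{-1/2} G(0)$; by near-extremality, $G(0)$ is arbitrarily close to $\BL(\textsbf{L}, \textsbf{p}) \prod_j (\int f_j)^{p_j}$, and since $\int f_{m+1} = 1$ the definition of $\BL(\widetilde{\textsbf{L}}, \widetilde{\textsbf{p}})$ delivers the reverse inequality. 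The main technical subtlety I anticipate is justifying continuity of $G$ at $0$: this relies on the observation that finiteness of $\BL(\textsbf{L}, \textsbf{p})$ forces $\bigcap_j \ker(L_j|_V) = \{0\}$ (otherwise the BL integrand would be invariant under a nontrivial translation of $V$), which makes preimages of compact sets under $v \mapsto (L_1 v, \ldots, L_m v)$ compact and permits DCT; the degenerate case $\BL(\textsbf{L}, \textsbf{p}) = \infty$ is handled in parallel by choosing test functions making $G(0)$ arbitrarily large.
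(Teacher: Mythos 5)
Your proof is correct and takes essentially the same route as the paper: the orthogonal decomposition $\R^n = V \oplus V^\perp$ together with Fubini and the Jacobian $\det(L_{m+1}L_{m+1}^*)^{1/2}$ of $L_{m+1}|_{V^\perp}$ gives the inequality $\BL(\widetilde{\textsbf{L}},\widetilde{\textsbf{p}}) \le \det(L_{m+1}L_{m+1}^*)^{-1/2}\BL(\textsbf{L},\textsbf{p})$, while your concentrated-bump argument for the reverse inequality is precisely the content of the paper's Lemma~\ref{sigmapprox} applied with $B=L_{m+1}$ and $z_0=0$, which the paper invokes rather than re-deriving inline. Your added care over continuity of $G$ at $0$ (via properness of $v\mapsto(L_1v,\ldots,L_mv)$ when $\BL(\textsbf{L},\textsbf{p})<\infty$) is a welcome technical clarification, but it does not change the essential structure of the argument.
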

The proofs of these lemmas are given in Section \ref{appendix}. Combining them with Theorem \ref{algbras} in the euclidean case then yields the general case.
\begin{proposition}
    If Theorem \ref{algbras} holds for $d=n$, then Theorem \ref{algbras} holds for general $d$.
\end{proposition}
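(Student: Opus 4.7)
The plan is to augment the quasialgebraic Brascamp--Lieb data by an extra map $B_{m+1}:A\to\R^{n-d}$, with exponent $p_{m+1}=1$, whose zero fiber locally recovers $M$, apply the assumed Euclidean case of Theorem \ref{algbras} to this extended system, and pass to a delta-function limit in the $(m+1)$-th slot to recover the $d$-dimensional integral over $M$ from an $n$-dimensional one.

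First, I would construct $B_{m+1}$. After discarding the singular locus of the underlying variety $V\supset M$ (a proper subvariety and therefore $\sigma$-null), at each point of $M$ the variety $V$ is locally the zero set of $n-d$ polynomials of degree at most $\deg(M)$ whose gradients are linearly independent. These polynomials assemble into a polynomial submersion $B_{m+1}:A\to\R^{n-d}$ defined on an open neighbourhood $A\subset\R^n$ of the point, with $M\cap A=B_{m+1}^{-1}(0)\cap A$ and $\deg B_{m+1}\leq\deg(M)$.

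Next, I would form the extended data $\widetilde{\textsbf{B}}=(B_1,\ldots,B_m,B_{m+1})$ with exponents $\widetilde{\textsbf{p}}=(p_1,\ldots,p_m,1)$; the scaling condition on $\R^n$ reads $\sum_{j=1}^m p_jn_j+(n-d)=n$ and is satisfied. Applying the Euclidean Theorem \ref{algbras} to $(\widetilde{\textsbf{B}},\widetilde{\textsbf{p}})$ on $A$ with test functions $(f_1,\ldots,f_m,\chi_\delta)$, Lemma \ref{BLconstants} applied pointwise at each $x$ with critical subspace $T_xM=\ker dB_{m+1}(x)$ gives
\[
\BL(\textsbf{d}\widetilde{\textsbf{B}}(x),\widetilde{\textsbf{p}})^{-1}=\det\bigl(dB_{m+1}(x)dB_{m+1}(x)^*\bigr)^{1/2}\,\BL_{T_xM}(\textsbf{dB}(x),\textsbf{p})^{-1}.
\]
Letting $\delta\to 0$, Lemma \ref{sigmapprox} converts the $n$-dimensional left-hand side into an integral over $M\cap A$ weighted by $\det(dB_{m+1}dB_{m+1}^*)^{-1/2}$; this Jacobian factor cancels the one arising from the Brascamp--Lieb identity, yielding exactly the weight in (\ref{eq:algbras}). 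On the right-hand side, the extra factor $(\deg B_{m+1}\int\chi_\delta)^1$ contributes only a dimensional constant times $\deg(M)$, so the prefactor has the required form.

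The main technical obstacle is the globalization of this construction, since an algebraic variety is not in general cut out globally by $n-d$ polynomial equations on all of $\R^n$. I would handle this by covering $M\cap V_{\mathrm{sm}}$ by countably many open sets $\{U_\alpha\}\subset\R^n$ each admitting the local construction, deriving the local estimate on each $M\cap U_\alpha$ as above, and gluing via a bounded-multiplicity covering and monotone convergence argument; the intrinsic character of the left-hand side of (\ref{eq:algbras})---which depends only on $M$, $\sigma$, the $B_j$ and $\BL_{T_xM}$, and not on the auxiliary choice of $B_{m+1}$---ensures the local estimates are compatible under such a gluing.
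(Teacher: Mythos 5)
Your proposal takes essentially the same route as the paper: augment the data with a polynomial map $B_{m+1}:\R^n\to\R^{n-d}$ whose zero set recovers $M$, set $p_{m+1}=1$, combine Lemma \ref{BLconstants} (to factor the Brascamp--Lieb weight through the critical subspace $T_xM=\ker dB_{m+1}(x)$) with Lemma \ref{sigmapprox} (to pass from the $n$-dimensional integral against $\chi_\delta\circ B_{m+1}$ to the surface integral over $M$), apply the Euclidean case inside the limit, and observe that the Jacobian factors $\det(dB_{m+1}dB_{m+1}^*)^{\pm 1/2}$ cancel while $\deg B_{m+1}\lesssim\deg M$ gives the stated constant. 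Where you diverge is the final globalization step: you worry that no single polynomial map $B_{m+1}$ with exactly $n-d$ components may cut out $M$ globally while realizing $\deg B_{m+1}=\deg M$, and you propose patching local constructions via a bounded-multiplicity cover. The paper sidesteps this by simply \emph{asserting} such a global $B_{m+1}$ and then restricting to an arbitrary bounded open set $A$ on which $B_{m+1}$ is a submersion along $M$, noting that the final bound is uniform in $A$ and so passes to the exhaustion; your covering version is a more cautious way of achieving the same thing and would be needed if the global $B_{m+1}$ the paper invokes is not in fact available with the right degree and number of components. Either way the core argument is identical, and your proposal is correct.
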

\begin{proof}
    Let $B_{m+1}:\R^n\rightarrow\R^{n-d}$ be a polynomial map such that $M$ is an open subset of $Z(B_{m+1})$, and that $\deg(B_{m+1})=\deg(M)$. Let $A\subset\R^n$ be any bounded open set such that $B_{m+1}$ restricts to a submersion on $A\cap M$. Recall the definition of $\chi_{\delta}$ from Lemma \ref{sigmapprox}. By Lemmas \ref{sigmapprox} and \ref{BLconstants}, we know that given any $f_j\in C_0^{\infty}( M_j)$,
    \begin{align*}
         \int_{A\cap M}\prod_{j=1}^mf_j\circ B_j(x)^{p_j}\frac{d\sigma(x)}{\BL_{T_xM}(\textsbf{dB}(x),\textsbf{p})}&=\int_{A\cap M}\prod_{j=1}^mf_j\circ B_j(x)^{p_j}\frac{\det(dB_{m+1}(x)dB_{m+1}(x)^*)^{-\frac{1}{2}}d\sigma(x)}{\BL(\widetilde{\textsbf{dB}}(x),\widetilde{\textsbf{p}})}\\
         &=\lim_{\delta\rightarrow 0}\int_{A}\prod_{j=1}^mf_j\circ B_j(x)^{p_j}\frac{\chi_{\delta}\circ B_{m+1}(x)dx}{\BL(\widetilde{\textsbf{dB}}(x),\widetilde{\textsbf{p}})}.
    \end{align*}
    Applying Theorem \ref{algbras} inside the limit on the right-hand side we then obtain
    \begin{align*}
         \int_{A\cap M}\prod_{j=1}^mf_j\circ B_j(x)^{p_j}&\frac{dx}{\BL_{T_xM}(\textsbf{dB}(x),\textsbf{p})}\nonumber\\
         &\lesssim \deg(B_{m+1})\underset{\delta\rightarrow 0}{\lim}\left(\int_{\R^{n-d}}\chi_{\delta}(z)dz\right)\prod_{j=1}^m\left(\deg(B_j)\int_{ M_j}f_j(x_j)d\mu_j(x_j)\right)^{p_j}\\
         &\simeq\deg(M)\prod_{j=1}^m\left(\deg(B_j)\int_{ M_j}f_j(x_j)d\mu_j(x_j)\right)^{p_j},
    \end{align*}
    which yields the desired inequality, since the right-hand side is uniform in the choice of $A$, and extends to arbitrary $f_j\in L^1( M_j)$ via density.
\end{proof}

We shall henceforth assume that our domain is of full dimension, and to emphasise this, for the remainder of the proof we shall denote the domain of $B_j$ by $U\subset\R^n$ instead of $M$.

Having reduced Theorem \ref{genalgbras} to the euclidean case, we shall further reduce Theorem \ref{genalgbras} to a more discrete inequality, where the domain $U$ is replaced with a compact subset $\Omega\subset U$, and the arbitrary $L^1$ functions $f_j$ take the specific form of characteristic functions associated to small balls on $M_j$.

\begin{proposition}\label{discretealgbras}
    For every compact set $\Omega\subset U$, there exists a $\nu>0$ such that, for all $\delta\in(0,\nu)$ and all finite collections $\mathcal{V}_j$ (allowing duplicates) of $\delta$-balls in $ M_j$, the following holds:
\begin{align}
    \int_{\Omega}\prod_{j=1}^m\left(\sum_{V_j\in\mathcal{V}_j}\chi_{V_j}\circ B_j(x)\right)^{p_j}\frac{dx}{\BL(\textsbf{dB}(x),\textsbf{p})}\lesssim\prod_{j=1}^m\left(\deg(B_j)\delta^{n_j}\#\mathcal{V}_j\right)^{p_j}.\label{eq:discretealgbras}
\end{align}
\end{proposition}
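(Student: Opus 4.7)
The plan is to reduce the discretised inequality to the rescaled form of Zorin-Kranich's Fremlin-norm inequality (\ref{eq:zorin}), applied to the fibres of the $B_j$ above the centres of the balls in $\mathcal{V}_j$. For each $V_j \in \mathcal{V}_j$ with centre $z_j^{V_j}$, set $H_j^{V_j} := B_j^{-1}(z_j^{V_j}) \cap A'$, where $A'$ is a bounded open set with $\Omega \Subset A' \Subset A$ on which each $B_j$ remains a $C^2$ submersion. Since $B_j$ is quasialgebraic, $H_j^{V_j}$ is an open subset of an algebraic variety of dimension $k_j := n - n_j$ and degree at most $\deg(B_j)$, so the disjoint union $\widetilde{H}_j := \bigsqcup_{V_j \in \mathcal{V}_j} H_j^{V_j}$ has total degree at most $\deg(B_j) \cdot \#\mathcal{V}_j$. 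The scaling condition $\sum_j p_j(n - k_j) = n$ in Theorem \ref{zorin} coincides with our hypothesis $\sum_j p_j n_j = n$.

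The key pointwise step is to establish, for $\nu$ sufficiently small (depending on $\Omega$, $A'$, and uniform $C^1$-bounds for the $B_j$) and any $\delta < \nu$,
$$\chi_{V_j} \circ B_j(x) \lesssim \delta^{-k_j} \int_{H_j^{V_j} \cap U_{C\delta}(x)} d\sigma_j(y_j),$$
valid uniformly for $x \in \Omega$. This follows because the implicit function theorem guarantees a fibre point $y_j \in H_j^{V_j}$ within distance $\lesssim \delta$ of any $x \in B_j^{-1}(V_j) \cap \Omega$, while smoothness of $H_j^{V_j}$ on $A'$ provides the uniform lower bound $\sigma_j(H_j^{V_j} \cap U_{C\delta}(y_j)) \gtrsim \delta^{k_j}$. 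Summing over $V_j \in \mathcal{V}_j$, raising to $p_j$, taking the product over $j$, and invoking the Fremlin-norm identity $\prod_j |X_j|^{p_j} = \|1\|^P_{\widebar{\bigotimes}_j L^{P/p_j}(X_j)}$ with $P := \sum_j p_j$, yields
$$\prod_j \left(\sum_{V_j \in \mathcal{V}_j} \chi_{V_j} \circ B_j(x)\right)^{p_j} \lesssim \delta^{-n(P-1)} \|1\|^P_{\widebar{\bigotimes}_j L^{P/p_j}(\widetilde{H}_j \cap U_{C\delta}(x))},$$
since $\sum_j p_j k_j = nP - n = n(P-1)$.

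To conclude, we absorb the weight $\BL(\textbf{dB}(x), \textbf{p})^{-1}$ into the Fremlin norm, turning the integrand into the Zorin-Kranich weight $BL'(\overrightarrow{T_{y_j}\widetilde{H}_j}, \textbf{p})^{-1/P}$. This uses the identity $\BL(\textbf{L}, \textbf{p})^{-1} = BL'(\overrightarrow{\ker L_j}, \textbf{p})^{-1} \prod_j |\det L_j L_j^*|^{p_j/2}$ relating the two Brascamp--Lieb formulations (readily derived from their definitions via the isomorphism $\R^n/\ker L_j \cong \R^{n_j}$ induced by $L_j$), applied at $x$ with $L_j = dB_j(x)$, together with continuity of $BL'$ under small perturbations of the defining subspaces (Theorem 5.2 of \cite{bennett2018nonlinear}) and uniform Lipschitz bounds for $dB_j$ on $\overline{A'}$. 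The resulting Jacobian factors $\prod_j |\det dB_j(y_j) dB_j(y_j)^*|^{p_j/2}$ are in product form in the $y_j$'s, so they are absorbed into the Fremlin measure. Applying the rescaled Zorin-Kranich inequality then produces the bound $\delta^{-n(P-1)} \cdot \delta^{nP} \prod_j (\deg(B_j) \#\mathcal{V}_j)^{p_j} = \prod_j (\deg(B_j) \delta^{n_j} \#\mathcal{V}_j)^{p_j}$, as desired. The main obstacle is this absorption step: carefully justifying the replacement of the single-point weight $\BL(\textbf{dB}(x), \textbf{p})^{-1}$ by the joint $(y_j)$-dependent weight inside the Fremlin tensor norm, tracking the Jacobian factors' interaction with the measures on $\widetilde{H}_j$, and verifying the hypothesis $P \geq 1$ needed for Theorem \ref{zorin} (or invoking its localised variant when $P < 1$).
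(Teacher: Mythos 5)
Your overall strategy---reducing to the rescaled Zorin--Kranich inequality (\ref{eq:zorin}) via a pointwise bound and the Fremlin-norm identity $\prod_j|X_j|^{p_j}=\|1\|^P_{\widebar{\bigotimes}_jL^{P/p_j}(X_j)}$, together with the $\BL$/$BL'$ formula of Lemma~\ref{formula}---is the right framework, and you have correctly identified the critical difficulty in the final paragraph. However, the single-fibre-per-ball construction $\widetilde{H}_j=\bigsqcup_{V_j}B_j^{-1}(z_j^{V_j})$ does not work, precisely because the ``absorption'' step you flag cannot be carried out with constants independent of $\Omega$, and $\Omega$-independence of the constant is essential: in the derivation of Theorem~\ref{algbras} from Proposition~\ref{discretealgbras} the compact sets exhaust $U$, so any blow-up in $\Omega$ is fatal.

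Here is the concrete obstruction. Writing $|R_j(x)|=\det(dB_j(x)dB_j(x)^*)^{1/2}$, Lemma~\ref{formula} gives $\BL(\textbf{dB}(x),\textbf{p})^{-1}=BL'(\overrightarrow{\ker dB_j(x)},\textbf{p})^{-1}\prod_j|R_j(x)|^{p_j}$. Your pointwise bound $\chi_{V_j}\circ B_j(x)\lesssim\delta^{-k_j}\sigma_j(H_j^{V_j}\cap U_{C\delta}(x))$ carries \emph{no} $|R_j(x)|$ factor, so after multiplying by the $\BL^{-1}$ weight and running your chain, the quantity $\prod_j|R_j(x)|^{p_j}$ (equivalently $\prod_j|R_j(y_j)|^{p_j}$ by local constancy) survives. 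This cannot be ``absorbed into the Fremlin measure'': Theorem~\ref{zorin} is stated for the Hausdorff measure on $H_j$, and the weights $|R_j(y_j)|$ are neither uniformly bounded above nor below independently of $\Omega$ (take $B_j(x)=a_jx_j$ in the Loomis--Whitney setting: then $\prod_j|R_j|^{p_j}=\prod_ja_j^{p_j}$ is a free parameter, and your argument delivers an excess factor of $\prod_ja_j^{p_j}$ relative to the desired bound). Bounding $\prod_j|R_j|^{p_j}\lesssim_{\Omega}1$ by compactness reintroduces the prohibited $\Omega$-dependence.

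The paper's construction is engineered to cancel this factor with \emph{universal} constants: rather than one fibre per ball $V_j$, it foliates each $B_j^{-1}(V_j)$ by the fibres over a fine $\delta^{\alpha}$-mesh $\Gamma(V_j)\subset 2V_j$ (with $\alpha>\beta>1$), takes $\delta^{\beta}$-neighbourhoods, and observes (Lemma~\ref{ptws}) that the number of these neighbourhoods containing a given $x\in B_j^{-1}(V_j)$ is comparable, via a volume-packing argument applied to the affine approximation $L_j^x$, to $|R_j(x)|\,\delta^{(\beta-\alpha)n_j}$. It is precisely this combinatorial overlap count that supplies the factor $|R_j(x)|$ which Lemma~\ref{formula} demands. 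The cost of the denser mesh, $\deg\widetilde{H}_j\lesssim\deg(B_j)\,\delta^{(1-\alpha)n_j}\#\mathcal{V}_j$, is then exactly repaid by the rescaling exponents, leaving $\prod_j(\deg(B_j)\delta^{n_j}\#\mathcal{V}_j)^{p_j}$. Your proposal is missing this mechanism, and without it there is no way to produce the Jacobian factor geometrically; replacing the single centred fibre by the mesh $\Gamma(V_j)$ and establishing the counting estimate of Lemma~\ref{ptws} is the missing idea.
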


\vspace{3pt}We may derive Theorem \ref{algbras} from Proposition \ref{discretealgbras} via a standard limiting argument, which we omit.

\subsection{Central Constructions}

The strategy for proving Proposition \ref{discretealgbras} is based on appealing to Theorem \ref{zorin}, in particular finding a collection of open subsets $H_1,...,H_m$ of algebraic varieties such that, if substituted into (\ref{eq:zorin}), then this inequality would yield (\ref{eq:discretealgbras}). These manifolds may be thought of as the unions of `discrete foliations' of the preimages $B_j^{-1}(V_j)$ via the fibres of $B_j$.\par
We shall now carry out this construction.
Fix $\Omega$ and let $\delta>0$ and $\mathcal{V}_j$ be a finite collection of $\delta$-balls in $M_j$. Let $\alpha>1$, for each $V_j\in\mathcal{V}_j$ let $x_{V_j}$ denote the centre of $V_j$, and choose an orthonormal basis $\partial_1,...,\partial_{n_j}\in T_{x_{V_j}} M_j$. Given $\epsilon>0$, we define the discrete $\epsilon$-grid $\Lambda^{\epsilon}_{V_j}:=\bigoplus_{i=1}^{n_j}\varepsilon\mathbb{Z}\partial_i$, and we consider the intersection of a dilation of $V_j$ with the image of this grid under the exponential map:
$$\Gamma(V_j):=\exp_{x_{V_j}}\left(\Lambda^{\delta^{\alpha}}_{V_j}\right)\cap 2V_j.$$
 \begin{figure}[h]
     \centering
     \includegraphics[width=6cm]{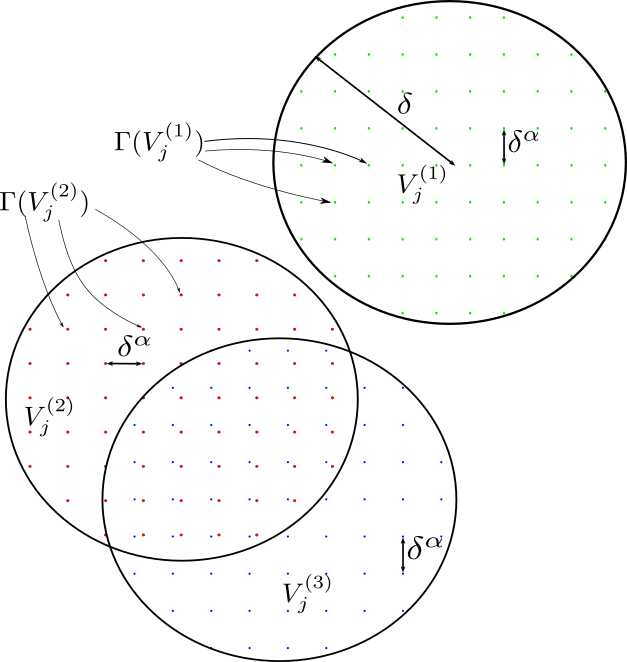}
     \caption{The specific case when $\mathcal{V}_j=\{V_j^{(1)},V_j^{(2)},V_j^{(3)}\}$}
     \label{fig: 1}
 \end{figure}
We have dilated the balls $V_j$ by a factor of $2$ for technical reasons that will become apparent in the proof of Lemma \ref{overlap}, the reader is encouraged to ignore it upon first reading. In order to track multiplicities, it shall be important that for each $V_j,V_j'\in\mathcal{V}_j$, we have $\Gamma(V_j)\cap\Gamma(V_j')=\emptyset$, however this is not guaranteed by our construction as it stands, hence if there exists $z\in\Gamma(V_j)\cap\Gamma(V_j')$, then we shall remedy this by simply translating one of these discrete sets by a negligible distance of, say, $\delta^{\alpha^{100}}$.\par
 
We shall now use the assumption that $B_j$ is quasialgebraic. For each $z\in M_j$ there exists a polynomial map $p_j^z:\R^n\rightarrow\R^{n_j}$ such that $B_j^{-1}(\{z\})$ is an open subset of $Z(p^z_j)$ and $\deg(p_j^z)\leq\deg(B_j)$. Define the following polynomial map:
 $$S_j:=\prod_{V_j\in\mathcal{V}_j}\prod_{z\in\Gamma(V_j)}p^z_j,$$
 and let $Z(S_j)$ be its zero-set. By our assumption that $B_j$ is a submersion, we may assume that $Z(S_j)$ is an $(n-n_j)$-dimensional variety, and contains the following open subset that will serve as our aforementioned `discrete' foliation:
 $$H_j:=\bigcup_{V_j\in\mathcal{V}_j}B_j^{-1}(\Gamma(V_j))\subset Z(S_j).$$
 
 Observe that if $\delta>0$ is chosen to be sufficiently small, then $\#\Gamma(V_j)\simeq \delta^{-\alpha n_j}|V_j|\simeq \delta^{(1-\alpha)n_j}$, hence we may bound the degree of $Z(S_j)$ as follows:
 \begin{align}
    \deg(Z(S_j))\leq\sum_{V_j\in\mathcal{V}_j}\sum_{z\in\Gamma(V_j)}\deg(p^z_j)\leq\deg(B_j)\sum_{V_j\in\mathcal{V}_j}\#\Gamma(V_j)\simeq \deg(B_j)\delta^{(1-\alpha)n_j}\#\mathcal{V}_j\label{eq:degree}
 \end{align}
 \begin{figure}[h]
     \centering
     \includegraphics[width=\linewidth]{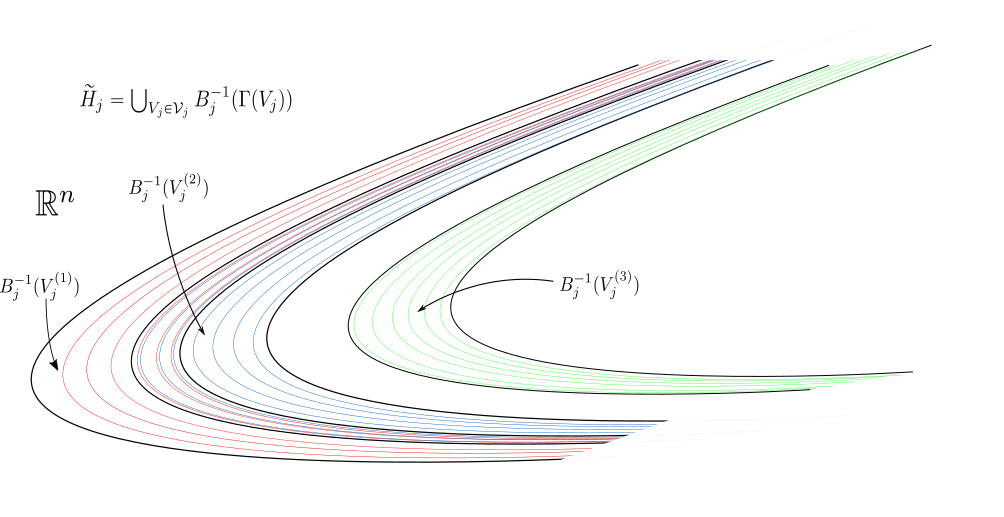}
     \caption{Picture of $H_j$}
     \label{fig: 2}
 \end{figure}
 \subsection{Heuristic Explanation of Proof Strategy}\label{heuristic}
 Let $f_j:=\sum_{V_j\in\mathcal{V}_j}\chi_{V_j}$, and observe that the right-hand side of (\ref{eq:degree}) is equal to $\deg(B_j)\delta^{-\alpha n_j}\int_{\mathbb{R}^{n_j}}f_j$, so provided we cancel the factor of $\delta^{\alpha n_j}$ at some stage, it then seems promising to substitute $H_1,...,H_m$ into (\ref{eq:zorin}), and try to obtain (\ref{eq:discretealgbras}) from there.\\

 Morally, we may view the left-hand side of (\ref{eq:zorin}) as measuring the size of the intersections of tubular neighbourhoods of the varieties $H_j$ of unit thickness, weighted by their mutual transversality. By rescaling we may reduce the size of these neighbourhoods to an arbitrarily small scale, for technical reasons we will reduce the thickness of the tubes to near $\delta^{\beta}$-scale, where $\alpha<\beta<1$.\\
 If we now substitute the varieties $H_1,...,H_m$ into (\ref{eq:zorin}), assuming our meshes $\Gamma(V_j)$ are sufficiently fine with respect to the size of $V_j$, then the left-hand side would essentially be measuring the size of the set
 \begin{align}
     \bigcap_{j=1}^m\bigcup_{V_j\in\mathcal{V}_j}\bigcup_{z\in\Gamma(V_j)}(B_j^{-1}(\{z\})+U_{\delta^{\beta}}(0)). \label{eq:heuristic}
 \end{align}
 which we claim contains $\bigcap_{j=1}^m\bigcup_{V_j\in\mathcal{V}_j}B_j^{-1}(V_j)\cap\Omega$, and it is this set that the left-hand side of (\ref{eq:discretealgbras}) is essentially measuring, so all we need to make sure of is that the two measures in question essentially coincide.\par
 The measure being applied to (\ref{eq:heuristic}) is the Lebesgue measure weighted not only by the transversality of the leaves $B_j^{-1}(\{z\})$ comprising $H_j$, as imparted by the integrand $BL'(\overrightarrow{T_{x_j}H_j},\textsbf{p})$, but also, for each $j$, by a combinatorial factor that counts, given $x\in\bigcap_{j=1}^m\bigcup_{V_j\in\mathcal{V}_j}B_j^{-1}(V_j)\cap\Omega$, the number of $\delta^{\beta}$--neighbourhoods that $x$ lies in, and this factor is given by $\sum_{z\in\Gamma(V_j)}\chi_{B_j^{-1}(\{z\})+U_{\delta^{\beta}}(0)}(x)$. As the forthcoming Lemma \ref{overlap} demonstrates, this factor itself splits into two factors: one counts the number of preimages $B_j^{-1}(V_j)$ that $x$ lies in, which is exactly given by $\sum_{V_j\in\mathcal{V}_j}\chi_{V_j}\circ B_j(x)$, and the other is a factor that counts the amount of overlap between tubes associated with the same ball $V_j$ at a point $x\in U$.
 \begin{figure}[h]
     \centering
     \includegraphics[width=\linewidth]{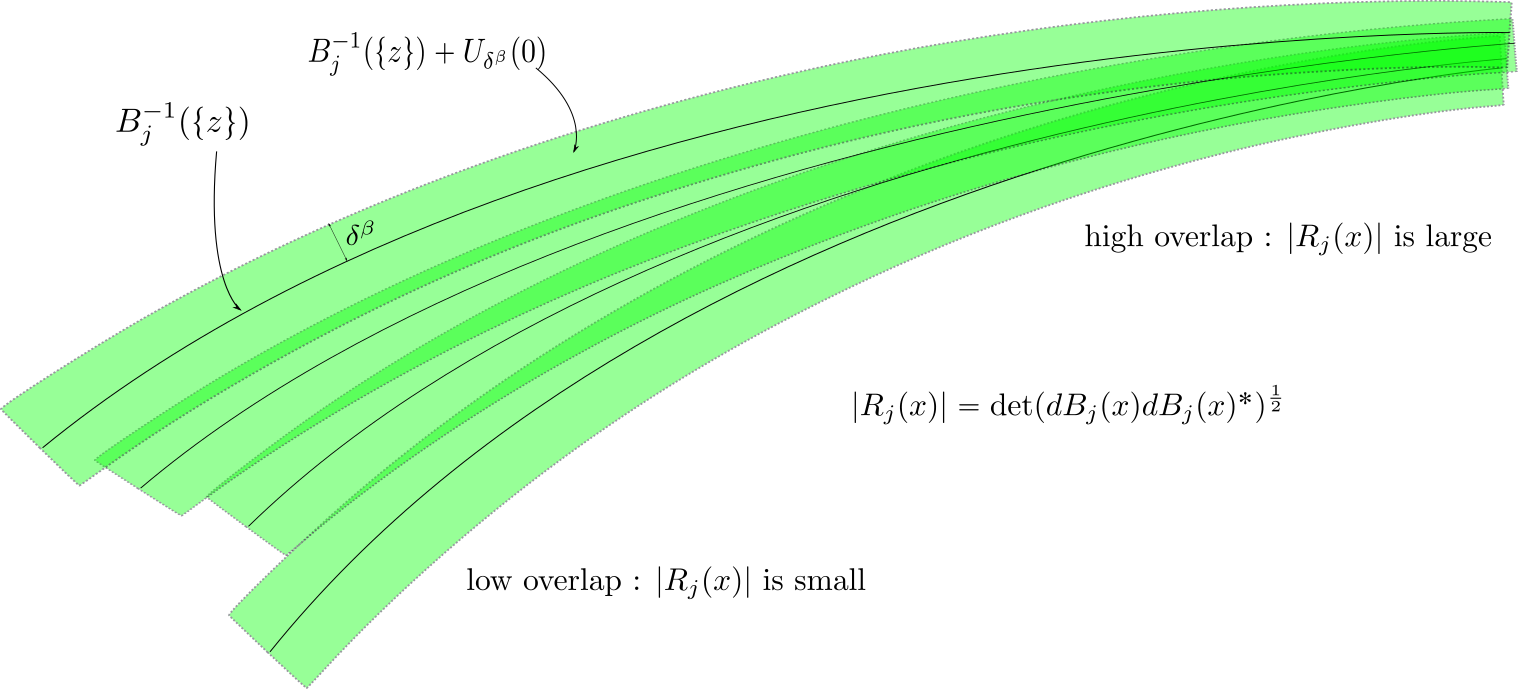}
     \caption{overlapping $\delta^{\beta}$-tubes}
     \label{fig: 3}
 \end{figure}
 This factor will be large when the tubes are tightly packed, and low when the tubes are more spaced out. These situations correspond to the derivative map $dB_j(x)$ having respectively large and small `volume', which is quantified by the function $|R_j(x)|$, which we define in the next section. It is due to the content of Lemma \ref{formula} that these additional $|R_j(x)|$-factors will allow us to move from $BL'$-factors to $\BL$-factors, which finally gives us the left-hand side of (\ref{eq:discretealgbras}).
 
 \section{Lemmas}

Here we shall prove the results that form the ingredients we need to prove Proposition \ref{discretealgbras}. First of all, we shall investigate how Fremlin tensor product norms behave under rescaling.
 
\begin{lemma}\label{rescale}
    Let $X_1,..,X_m\subset\R^n$ be smooth submanifolds such that $\dim(X_j)=k_j$, let $q_1,...,q_m\geq 1$, and let $F\in\bigotimes_{j=1}^m L^{q_j}(X_j)$. Then, for all $\epsilon>0$,
    \begin{align*}
        \Vert BL'(\overrightarrow{T_{x_j}X_j},\textsbf{p})\Vert_{\widebar{\bigotimes}_{j=1}^m L^{q_j}_{x_j}(X_j)}=\epsilon^{\sum k_j/q_j}\Vert BL'(\overrightarrow{T_{x_j}(\epsilon^{-1} X_j)},\textsbf{p})\Vert_{\widebar{\bigotimes}_{j=1}^mL^{q_j}_{x_j}(\varepsilon^{-1} X_j)}.
    \end{align*}
\end{lemma}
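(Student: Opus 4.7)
The plan is to reduce the identity to an elementary scaling computation, exploiting the fact that the Brascamp--Lieb constant $BL'$ depends only on the subspaces involved, and these subspaces are invariant under dilations of $\R^n$.

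First, since the map $\R^n\to\R^n$, $x\mapsto\epsilon^{-1}x$, is linear, its differential at any point is the constant map $\epsilon^{-1}I$. Hence, under the bijection $y_j:=\epsilon^{-1}x_j$ between $X_j$ and $\epsilon^{-1}X_j$, the tangent subspaces coincide as subsets of $\R^n$, i.e.\ $T_{y_j}(\epsilon^{-1}X_j)=T_{x_j}X_j$. Since $BL'$ is defined purely in terms of these subspaces, this yields the pointwise identity $BL'(\overrightarrow{T_{x_j}X_j},\textsbf{p})=BL'(\overrightarrow{T_{y_j}(\epsilon^{-1}X_j)},\textsbf{p})$.

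Next, I would compute how $L^{q_j}$ norms transform. The dilation $y\mapsto\epsilon y$ from $\epsilon^{-1}X_j$ to $X_j$ restricts to a linear isomorphism on tangent spaces with Jacobian $\epsilon^{k_j}$ with respect to the $k_j$-dimensional Hausdorff measure, so for any measurable $f:X_j\to\R$ and $g(y):=f(\epsilon y)$ we have
$$\Vert f\Vert_{L^{q_j}(X_j)}=\epsilon^{k_j/q_j}\Vert g\Vert_{L^{q_j}(\epsilon^{-1}X_j)}.$$
I would then upgrade this to the Fremlin tensor norm directly from its definition: a product decomposition $|F|\leq|F_1|\otimes\cdots\otimes|F_m|$ on $\prod_j X_j$ corresponds, via $x_j=\epsilon y_j$, to a product decomposition $|G|\leq|G_1|\otimes\cdots\otimes|G_m|$ on $\prod_j\epsilon^{-1}X_j$, where $G(y_1,\dots,y_m):=F(\epsilon y_1,\dots,\epsilon y_m)$ and $G_j(y_j):=F_j(\epsilon y_j)$. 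Since the dilation is invertible, this is a bijection between admissible decompositions; taking infima and applying the single-variable scaling to each factor yields
$$\Vert F\Vert_{\widebar{\bigotimes}_{j=1}^mL^{q_j}(X_j)}=\epsilon^{\sum_j k_j/q_j}\Vert G\Vert_{\widebar{\bigotimes}_{j=1}^mL^{q_j}(\epsilon^{-1}X_j)}.$$
Specialising to $F(x_1,\dots,x_m):=BL'(\overrightarrow{T_{x_j}X_j},\textsbf{p})$, the first step identifies $G$ with $BL'(\overrightarrow{T_{y_j}(\epsilon^{-1}X_j)},\textsbf{p})$, and the claimed identity follows.

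No step here is difficult; the arithmetic is mechanical, and the only mildly subtle point is verifying that the correspondence between product decompositions under rescaling is genuinely a bijection, so that the two infima defining the Fremlin norms range over matching sets. This is immediate from the invertibility of the dilation, so there is no real obstacle.
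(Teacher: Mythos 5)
Your proof is correct and follows essentially the same route as the paper's: both note that dilation preserves tangent subspaces (so the $BL'$ factor is invariant pointwise under the change of variables), both rescale the $L^{q_j}$ factor norms to produce the $\epsilon^{k_j/q_j}$ contributions, and both pass from the factorwise identity to the Fremlin tensor norm by observing that dilation gives a bijection between admissible product decompositions on each side. The paper is slightly terser about that last bijection point, but the content is the same.
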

\begin{proof}
First of all, since dilation is a conformal mapping, it must preserve tangent spaces of submanifolds, so in particular $T_{\epsilon x_j}X_j=T_{x_j}(\epsilon^{-1} X_j)$. For each $j\in\{1,...,m\}$, let $F_j\in L^{q_j}(X_j)$ be an arbitrary function satisfying $F_j\geq 0$ and $BL'(\overrightarrow{T_{x_j}X_j},\textsbf{p})\leq F_1(x_1)...F_m(x_m)$ a.e. pointwise. By the definition of a Fremlin tensor product norm, it then suffices that 
\begin{align*}
     \prod_{j=1}^m\Vert F_j\Vert_{L^{q_j}(X_j)}=\epsilon^{\sum k_j/q_j}\prod_{j=1}^m\Vert F_j(\epsilon\cdot)\Vert_{L^{q_j}(\epsilon^{-1} X_j)},
\end{align*}
which follows immediately from rescaling the $L^{q_j}$ norms.
\end{proof}

 A necessary ingredient for proving Proposition \ref{discretealgbras} is a formula relating the standard $\BL$-constants with the nonstandard $BL'$-constants arising in (\ref{eq:zorin}). We find that we may derive an explicit factorisation that makes explicit the dual role that the BL-constant plays, in both measuring the mutual transversality of the kernels of the $L_j$ and measuring how close the maps $L_j$ come to being non-surjective.
\begin{lemma}\label{formula}
Let $(\textsbf{L},\textsbf{p})$ be a Brascamp--Lieb datum such that each map $L_j:V\rightarrow V_j$ is surjective, and let $R_j\in\Lambda^{n_j}(V)$ denote the $n_j$-fold wedge product of the rows of $L_j$ with respect to a suitable orthonormal basis, then
\begin{align}
\BL(\textsbf{L},\textsbf{p})=BL'(\overrightarrow{\ker(L_j)},\textsbf{p})\prod_{j=1}^m|R_j|^{-p_j}.
\end{align}\label{eq:formula}
\end{lemma}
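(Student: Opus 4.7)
The plan is to directly compare the two Brascamp--Lieb constants by writing each as an extremal ratio and matching inputs through the canonical identification between $H_j$ and the quotient $H/\ker(L_j)$, induced by $L_j$ itself.

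First, I would identify $|R_j|$ with the natural volume factor attached to $L_j$. Since $R_j$ is the wedge of the $n_j$ rows of $L_j$, viewed as vectors in $H$, the standard Gram determinant identity gives $|R_j|^2 = \det(L_j L_j^*)$. Equivalently, if one identifies $H/\ker(L_j)$ with $\ker(L_j)^{\perp}\subset H$ and lets $\bar{L}_j$ denote the induced linear isomorphism onto $H_j$, then $|\det \bar{L}_j| = |R_j|$, since this determinant is precisely the $n_j$-volume of the parallelepiped spanned by the rows of $L_j$.

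Next, I would express both constants as suprema over nonnegative inputs,
\[
\BL(\textsbf{L},\textsbf{p}) \;=\; \sup_{f_1,\dots,f_m\ge 0}\frac{\int_H \prod_j f_j(L_j x)^{p_j}\,dx}{\prod_j \bigl(\int_{H_j} f_j\bigr)^{p_j}},
\]
and similarly for $BL'(\overrightarrow{\ker L_j},\textsbf{p})$ with inputs $g_j$ on $H/\ker(L_j)$. I would then set $g_j := f_j \circ \bar{L}_j$: the identity $g_j(x + \ker L_j) = f_j(L_j x)$ makes the numerators of the two ratios coincide for every choice of inputs, and the linear change of variables via $\bar{L}_j$ gives $\int_{H/\ker L_j} g_j = |R_j|^{-1}\int_{H_j} f_j$, so the denominators differ by exactly $\prod_j |R_j|^{-p_j}$. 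Since $f_j \mapsto g_j$ is a bijection between admissible nonnegative inputs, taking suprema yields the claimed identity.

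No deep obstacle is anticipated; the only care required is volume bookkeeping, namely verifying that the induced measure on the quotient $H/\ker(L_j)$ is the pushforward of Lebesgue measure on $\ker(L_j)^{\perp}$, so that the Jacobian $|\det \bar{L}_j| = |R_j|$ appears with the correct exponent. The content of the lemma is really that $BL'$ absorbs the part of $\BL$ that measures the mutual transversality of the $\ker(L_j)$, while the surjectivity-failure contribution of each $L_j$ is cleanly isolated into the scalar factor $|R_j|^{p_j}$.
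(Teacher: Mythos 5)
Your proposal is correct and follows essentially the same route as the paper: both arguments factor $L_j$ through the quotient $H/\ker(L_j)$, identify the Jacobian of the induced isomorphism with $|R_j|$ (via $|R_j|^2=\det(L_jL_j^*)$ or an equivalent volume computation), and transport test functions across this identification to relate the two constants. Your formulation of the final step as a bijection of admissible inputs at the supremum level is a slightly tidier way of expressing what the paper phrases as observing that its intermediate inequality is sharp.
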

\begin{proof}
   For the sakes of concreteness, we shall assume that the domains of the surjections $L_j$ is $\R^n$ equipped with the standard inner product. By the first isomorphism theorem, for each $j\in\{1,...,m\}$ there exists an isomorphism $\phi_j:\R^{n}/\ker(L_j)\rightarrow V_j$ such that $L_j=\phi_j\circ\pi_j$, where $\pi_j:\R^n\rightarrow\R^n/\ker(L_j)$ is the canonical projection map.\\
   First of all, we claim that $|\det(\phi_j)|=|R_j|$.
    To see this, observe that $|L_j[0,1]^n|=|\phi_j\circ\pi_j[0,1]^n|=|\det(\phi)|$, so the claim then follows provided we can show that $|L_j[0,1]^{n}|=|R_j|$.
\begin{align*}
    |L_j[0,1]^n|&=|(L_j[0,1]^n)\times[0,1]^{n-n_j}|= |M^{\top}[0,1]^n|=|\det(M)|,
\end{align*}
where $M\in\R^{n\times n}$ is the matrix whose first $n_j$ rows are the rows of $L_j$ and the last $n-n_j$ rows are $e_{n_j+1},...,e_n$, where $e_1,...,e_n$ is an orthonormal basis of $\mathbb{R}^n$ such that $e_1,...,e_{n_j}$ span $\ker(L_j)^{\bot}$ and $e_{n_j+1},...,e_n$ spans $\ker(L_j)$. Since $R_j=\pm|R_j|\bigwedge_{j=1}^{n_j}e_j$, the claim then quickly follows:
\begin{align*}
    |L_j[0,1]^n|= |\det M|&= |R_j\wedge(\bigwedge_{j=n_j+1}^{n}e_j)|=|R_j||\bigwedge_{j=1}^{n}e_j|=|R_j|.
\end{align*}
Now, let $f_j\in L^1(V_j)$ be arbitrary and $\widetilde{f}_j:=f_j\circ \phi_j$. We may then change variables and rewrite the left-hand side of the Brascamp--Lieb inequality associated to $(\textsbf{L},\textsbf{p})$ as follows.
\begin{align}
    \int_{\R^n}\prod_{j=1}^mf_j\circ L_j(x)^{p_j}dx =\int_{\R^n}\prod_{j=1}^m\widetilde{f}_j\circ\pi_j(x)^{p_j}dx =\int_{\R^n}\prod_{j=1}^m\widetilde{f}_j(x+\ker(L_j))^{p_j}dx \label{eq:formula1}
\end{align}
Moreover, $\int_{\R^n/\ker(L_j)}\widetilde{f}_j=|\det(\phi_j)|^{-1}\int_{ H_j}f_j=|R_j|^{-1}\int_{ H_j}f_j$, hence combining this with (\ref{eq:formula1}) we obtain that
\begin{align}
      \int_{\R^n}\prod_{j=1}^mf_j\circ L_j(x)^{p_j}dx \leq BL'(\overrightarrow{\ker(L_j)},\textsbf{p})\prod_{j=1}^m\left(|R_j|^{-1}\int_{ H_j}f_j\right)^{p_j}. \label{eq:formula2}
\end{align}
Therefore $\BL(\textsbf{L},\textsbf{p})\leq BL'(\overrightarrow{\ker(L_j)},\textsbf{p})\prod_{j=1}^m|R_j|^{-p_j}$. Furthermore, observing that \eqref{eq:formula2} is sharp, by the definitions of $\BL$ and $BL'$, this automatically improves to the desired formula $\BL(\textsbf{L},\textsbf{p})= BL'(\overrightarrow{\ker(L_j)},\textsbf{p})\prod_{j=1}^m|R_j|^{-p_j}$.
\end{proof}
We remark that $|R_j|$ may also be written as $\det(L_jL_j^*)^{1/2}$, since $|R_j|^2=\langle R_j,R_j\rangle_{\Lambda^{n_j}(\R^n)}=\det((r_{j,k}\cdot r_{j,l})_{k,l=1}^n)=\det(L_jL_j^*)$, where $r_{j,k}$ is the $k^{th}$ row of $L_j$. As one would expect, the formula \eqref{eq:formula1} also allows us to carry stability properties from the standard $\BL$-constants to the $BL'$-constants arising in \eqref{eq:zorin}, which we state more precisely in the following corollary.
\begin{Corollary}\label{BLlocconst}
Let $\Omega\subset U$ be compact. Writing $\textsbf{x}:=(x_1,...,x_m)$, the weight function $g:\Omega^m\rightarrow\R$  defined by $$g(\textsbf{x}):=BL'(\overrightarrow{\ker dB_j(x_j)},\textsbf{p})^{-1}$$ is uniformly continuous and locally constant at a sufficiently small scale, that is to say for $\epsilon>0$ sufficiently small depending on $\Omega$, for all $\textsbf{x},\textsbf{y}\in\Omega^m$,
$$|\textsbf{x}-\textsbf{y}|<\epsilon\Longrightarrow g(\textsbf{x})\lesssim g(\textsbf{y}).$$
\end{Corollary}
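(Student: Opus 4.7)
The plan is to apply Lemma \ref{formula} pointwise to rewrite $g$ in terms of the standard $\BL$-constant together with elementary continuous factors, and then to invoke the continuity of $\BL^{-1}$ in its linear data. Taking $L_j = dB_j(x_j)$ and $|R_j(x_j)| = \det(dB_j(x_j)dB_j(x_j)^*)^{1/2}$, Lemma \ref{formula} gives
\[
g(\textbf{x}) \;=\; \BL\bigl((dB_j(x_j))_{j=1}^m,\textbf{p}\bigr)^{-1} \prod_{j=1}^m |R_j(x_j)|^{p_j}.
\]
It therefore suffices to show that each of the two factors on the right is continuous and bounded above and away from zero on the compact set $\Omega^m$.

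The factor $\prod_j |R_j(x_j)|^{p_j}$ is continuous by smoothness of each $B_j$ on $U$ (guaranteed by the reductions of Section \ref{setup}) and strictly positive because each $B_j$ has been reduced to a submersion; compactness of $\Omega$ then gives the upper and lower bounds. For the $\BL^{-1}$ factor, Theorem 5.2 of \cite{bennett2018nonlinear} states that $\textbf{L} \mapsto \BL(\textbf{L},\textbf{p})^{-1}$ is continuous on the space of tuples of linear surjections; composing with the continuous map $\textbf{x} \mapsto (dB_j(x_j))_{j=1}^m$ and using the standing hypothesis $\BL_{T_xM}(\textbf{dB}(x),\textbf{p}) < \infty$ on $U$ (so that $\BL^{-1}$ is itself strictly positive), continuity and strict positivity on $\Omega^m$ follow, and one again obtains upper and lower bounds by compactness.

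Combining these observations, $g$ is continuous, bounded, and bounded away from zero on the compact set $\Omega^m$, hence uniformly continuous. For the local-constancy claim I would use uniform continuity to choose $\epsilon > 0$ (depending on $\Omega$) small enough that $|g(\textbf{x}) - g(\textbf{y})| \le \tfrac{1}{2} \inf_{\textbf{z} \in \Omega^m} g(\textbf{z})$ whenever $|\textbf{x} - \textbf{y}| < \epsilon$; this yields $g(\textbf{x}) \le \tfrac{3}{2}\, g(\textbf{y})$, and since the implicit factor $\tfrac{3}{2}$ is absolute it conforms to the paper's $\lesssim$ convention (the $\Omega$-dependence is absorbed entirely into the choice of $\epsilon$, as the statement of the corollary permits). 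The only nontrivial input is the continuity of $\BL^{-1}$ cited from \cite{bennett2018nonlinear}; the rest is routine continuity and compactness, and I do not anticipate any serious obstacle.
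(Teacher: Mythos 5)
Your proof is essentially the same as the paper's: both invoke Lemma \ref{formula} to pass from $BL'$ to $\BL$, cite continuity of the reciprocal Brascamp--Lieb constant in the linear data, and upgrade to uniform continuity and local constancy by compactness and strict positivity. The one structural difference is cosmetic: the paper first replaces $dB_j(x_j)$ by $L_j^{x_j}:=(\phi_j^{x_j})^{-1}\circ\Pi_j^{x_j}$, with $\phi_j^{x_j}$ a (locally) continuously varying isometric isomorphism onto $\ker dB_j(x_j)^{\bot}$, so that the rows of $L_j^{x_j}$ are orthonormal, the wedge factor in Lemma \ref{formula} is identically one, and $g(\textsbf{x})=\BL(\textsbf{L}^{\textsbf{x}},\textsbf{p})^{-1}$ with no residual factor; you instead apply Lemma \ref{formula} directly to $dB_j(x_j)$ and carry the $|R_j|$ term explicitly, which works equally well since it is continuous and strictly positive on $\Omega$ by the standing submersion reduction. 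Note a small algebraic slip: solving $\BL(\textsbf{L},\textsbf{p})=BL'(\overrightarrow{\ker L_j},\textsbf{p})\prod_j|R_j|^{-p_j}$ for $g=(BL')^{-1}$ gives $g(\textsbf{x})=\BL\bigl((dB_j(x_j))_j,\textsbf{p}\bigr)^{-1}\prod_j|R_j(x_j)|^{-p_j}$, with negative exponents on the $|R_j|$, not positive as you wrote; this does not affect the continuity, boundedness, or positivity argument, but should be corrected. Your deduction of the local-constancy implication from uniform continuity plus the lower bound is also fine; the paper phrases it via uniform continuity of $g\otimes g^{-1}$ on $(\Omega^m)^2$ and the identity $(g\otimes g^{-1})(\textsbf{x};\textsbf{x})=1$, but both arguments reduce to the same compactness fact.
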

\begin{proof}
For each $x_j\in\Omega$, let $\Pi_j^{x_j}:\R^n\rightarrow \ker(dB_j(x_j))^{\bot}$ denote the projection map onto $\ker(dB_j(x_j))^{\bot}$, and let $\phi_j^{x_j}: \R^{n_j}\rightarrow \ker(dB_j(x_j))^{\bot}$ be a family of isometric isomorphisms that varies continuously in $x_j$. Define the family of surjections $L^{x_j}_j:\R^n\rightarrow \R^{n_j}$ by $L_j^{x_j}:=(\phi_j^{x_j})^{-1}\circ\Pi_j^{x_j}$, and let $\textsbf{L}^{\textsbf{x}}:=(L_j^{x_j})_{j=1}^m$. By Lemma \ref{formula}, $\BL(\textsbf{L}^{\textsbf{x}},\textsbf{p})^{-1}=g(\textsbf{x})^{-1}\prod_{j=1}^m|\det(\phi_j^{x_j})|^{p_j}=g(\textsbf{x})$, hence continuity of $g$ follows from the continuity of the reciprocal of the Brascamp--Lieb constant over $\Omega$, which was established in \cite{bennett2017behaviour}. By compactness of $\Omega$ and the positivity of $g$, $g\otimes g^{-1}$ is then uniformly continuous on $(\Omega^m)^2$, so because $g\otimes g^{-1}(\textsbf{x};\textsbf{x})=1$ for all $\textsbf{x}\in\Omega^m$, there exists $\epsilon>0$ such that for all $\textsbf{x},\textsbf{y}\in\Omega^m$, $g\otimes g^{-1}(\textsbf{x};\textsbf{y})<2$ provided that $|\textsbf{x}-\textsbf{y}|<\epsilon$, completing the proof.
\end{proof}

 The next proposition will allow us to simultaneously cover the preimages $B_j^{-1}(V_j)$ of the Balls $V_j$ by tubular neighbourhoods of the varieties comprising $H_j$, and account for the missing factor in the weight $\BL(\textsbf{dB}(x),\textsbf{p})^{-1}$, as alluded to in Section \ref{heuristic}.
\begin{lemma}\label{overlap}
Let $\Omega\subset U$ be compact and fix $j\in\{1,...,m\}$. Let $R_j(x)\in\Lambda^{n_j}(\R^n)$ denote the $n_j$-fold wedge product of the rows of $dB_j(x)$, then for a sufficiently small choice of $\delta>0$ depending on $\Omega$, over all $x\in \Omega$,
\begin{align}
   | R_j(x)|\chi_{V_j}\circ B_j(x)\lesssim\delta^{(\alpha-\beta)n_j}\sum_{z\in\Gamma(V_j)}\chi_{B_j^{-1}(\{z\})+U_{\delta^{\beta}}(0)}(x).\label{eq:overlap}
\end{align}
\end{lemma}
To prove this lemma, we shall need to establish the following intuitive geometric fact that shall allow us to deal with the nonlinearity present in the quasialgebraic maps $B_j$.
\begin{lemma}\label{linearise}
     Given the same hypotheses as Lemma \ref{overlap}, for a sufficiently small choice of $\delta>0$ depending on $\Omega$,  $L^x_j(U_{\delta/2}(x))\subset B_j(U_{\delta}(x))$ for all $x\in \Omega$, where $L_j^x(y):=\exp_{B_j(x)}(dB_j(x)(y-x))$ is now the first-order approximation of $B_j$ about $x$ (not to be confused with the notation used in Corollary \ref{BLlocconst}).
\end{lemma}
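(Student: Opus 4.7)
The plan is to prove Lemma \ref{linearise} by a quantitative inverse function theorem argument, using uniformity that follows from the compactness of $\Omega$, the $C^\infty$ smoothness of $B_j$, and the submersion assumption built into the initial reductions.

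First I would pass to exponential coordinates around $B_j(x)$. Fix $x\in\Omega$ and set $F_x:U_{\delta}(0)\to T_{B_j(x)}M_j$ by $F_x(v):=\exp_{B_j(x)}^{-1}(B_j(x+v))$, so that $F_x(0)=0$ and $dF_x(0)=dB_j(x)$. In these coordinates the desired inclusion $L_j^x(U_{\delta/2}(x))\subset B_j(U_{\delta}(x))$ becomes $dB_j(x)(U_{\delta/2}(0))\subset F_x(U_{\delta}(0))$. Next I would restrict attention to $W_x:=(\ker dB_j(x))^{\perp}$: since $dB_j(x)$ annihilates $\ker dB_j(x)$, the left-hand side coincides with $dB_j(x)(W_x\cap U_{\delta/2}(0))$, and the map $\tilde F_x:=F_x|_{W_x}$ has derivative $dB_j(x)|_{W_x}:W_x\to T_{B_j(x)}M_j$ at $0$, which is a linear isomorphism.

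Given a target point $w=dB_j(x)(u)$ with $u\in W_x\cap U_{\delta/2}(0)$, I would invoke the Banach fixed-point theorem for the map
\begin{align*}
G_w(v):=v-(dB_j(x)|_{W_x})^{-1}(\tilde F_x(v)-w)=u+(dB_j(x)|_{W_x})^{-1}(dB_j(x)(v)-\tilde F_x(v))
\end{align*}
on the closed ball $\overline{W_x\cap U_{\delta}(0)}$. A second-order Taylor expansion yields $\tilde F_x(v)=dB_j(x)(v)+O(|v|^2)$, from which one checks that $G_w$ sends the ball into itself and is a contraction, provided $\delta$ is smaller than a threshold $\delta_0$ determined by an upper bound on $\|d^2B_j\|$ in a neighbourhood of $\Omega$ and a lower bound on the smallest singular value of $dB_j(x)|_{W_x}$. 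The resulting fixed point $v^*\in W_x\cap U_{\delta}(0)$ satisfies $\tilde F_x(v^*)=w$, which unwinds to $B_j(x+v^*)=\exp_{B_j(x)}(dB_j(x)(u))=L_j^x(x+u)$ with $x+v^*\in U_{\delta}(x)$, establishing the inclusion.

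The only substantive point, and hence the main obstacle, is showing that the threshold $\delta_0$ may be chosen uniformly in $x\in\Omega$. This follows from compactness: $x\mapsto\det(dB_j(x)\,dB_j(x)^{*})$ is continuous and (by the submersion reduction) strictly positive on $\Omega$, so is bounded below there; and $B_j$ is $C^{\infty}$ on an open neighbourhood of $\Omega$, so $\|d^2B_j\|$ is bounded above there. These two bounds furnish a single $\delta_0>0$ depending only on $\Omega$ for which the contraction-mapping step goes through simultaneously for every $x\in\Omega$, which is exactly the uniformity asserted by the lemma.
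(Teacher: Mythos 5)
Your argument is correct and takes a genuinely different route from the paper's. You prove the inclusion by a quantitative inverse function theorem: after passing to normal coordinates at $B_j(x)$ and restricting to $W_x:=\ker dB_j(x)^{\perp}$, you solve $\tilde F_x(v)=w$ for each target $w\in dB_j(x)(W_x\cap U_{\delta/2}(0))$ by a Newton-type contraction, with the threshold $\delta_0$ uniform over $\Omega$ by compactness: a lower bound on $\det(dB_j\,dB_j^{*})$ together with an upper bound on $\Vert dB_j\Vert$ bounds the smallest singular value of $dB_j(x)\vert_{W_x}$ from below, and $\Vert d^2B_j\Vert$ is bounded above. The paper instead invokes Frobenius' theorem: it asserts that the distribution $x\mapsto\ker dB_j(x)^{\perp}$ is involutive, builds a foliation tangent to it, locally parametrises the leaves by projection, and then runs a boundary-separation estimate along the leaves to conclude. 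That involutivity step appears to contain a gap: the paper derives it from the claim that $\range dX(x)\subset\ker dB_j(x)^{\perp}$ whenever the vector field $X$ is tangent to the distribution, but the range of the Jacobian $dX(x)$ of a vector field is not constrained by where the vector field takes its values. In fact $\ker dB_j^{\perp}$ need not be integrable at all: for the polynomial submersion $B(x,y,z)=(y,\,x+yz)$ on $\R^3$ one computes $\ker dB^{\perp}=\ker(dz-y\,dx)$, the standard contact distribution, which is non-integrable. Your contraction-mapping argument makes no appeal to integrability of $\ker dB_j^{\perp}$ — it works directly on the affine slice $x+W_x$ rather than on a leaf of a purported foliation — and is therefore both more elementary and more robust; it establishes the lemma cleanly and sidesteps the issue in the paper's proof entirely.
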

\begin{proof}
 Fix some $x\in\Omega$. Let $T:=\exp_{B_j(x)}\circ(dB_j(x)dB_j(x)^*)^{-1/2}\circ \exp_{B_j(x)}^{-1}$, then for $\delta>0$ smaller than the minimal injectivity radius among $z\in B_j(\Omega)$.
    \begin{align*}
        L^x_j(U_{\delta/2}(x))\subset B_j(U_{\delta}(x))\iff TL^x_j(U_{\delta/2}(x))\subset TB_j(U_{\delta}(x))
    \end{align*}
    Hence we may assume without loss of generality that $dB_j(x)$ is a projection, in the sense that $dB_j(x)dB_j(x)^*=I_{T_{B_j(x)}M_j}$, and thus we may also assume that $L_j(U_{\delta/2}(x))=U_{\delta/2}(B_j(x))$.
    It then suffices to show that for $\delta>0$ sufficiently small, $\partial B_j(U_\delta(x))\cap U_{\delta/2}(B_j(x))=\emptyset$, in other words that for all $z\in\partial B_j(U_{\delta}(x))$, $d(z,B_j(x))>\delta/2$. First of all, $\partial B_j(U_{\delta}(x))=B(\partial U_{\delta}(x))$, so for a given $z\in\partial B_j(U_{\delta}(x))$, there exists a $y\in \partial U_{\delta}(x)$ such that $\exp_{B_j(x)}\circ dB_j(x)(y)=z$. By Taylor's theorem, there exists a $c>0$ depending on $\Omega$ such that, for $\delta>0$ sufficiently small,
    \begin{align*}
        d(z,B_j(x))&=|dB_j(x)(y-x)|+\mathcal{O}(|y-x|^2)\\
                &\geq\delta-c\delta^2>\delta/2
    \end{align*}
\end{proof}
\begin{proof}[Proof of Lemma \ref{overlap}]
 We immediately have that for each $x\in \Omega$,
\begin{align*}
 \sum_{z\in\Gamma(V_j)}\chi_{B_j^{-1}(\{z\})+U_{\delta^{\beta}}(0)}(x)&=\#\{z\in\Gamma(V_j):d(x,B_j^{-1}(\{z\}))\leq\delta^{\beta}\}\\
 &=\#\left(\Gamma(V_j)\cap B_j(U_{\delta^{\beta}}(x))\right)\\
 &=\#\left(\exp_{x_{V_j}}\left(\Lambda^{\delta^{\alpha}}_{V_j}\right)\cap 2V_j\cap B_j(U_{\delta^{\beta}}(x))\right).
 \end{align*}
 By Lemma \ref{linearise} we then, for $\delta>0$ sufficiently small, have the bound
 \begin{align}
  \sum_{z\in\Gamma(V_j)}\chi_{B_j^{-1}(\{z\})+U_{\delta^{\beta}}(0)}(x)\geq
 \#(\exp_{x_{V_j}}\left(\Lambda^{\delta^{\alpha}}_{V_j}\right)\cap 2V_j\cap L^x_j(U_{\delta^{\beta}/2}(x))).\label{eq:overlap1}
 \end{align}

Recall that we denote the centre of $V_j$ by $x_{V_j}\in M_j$. $\vert dB_j(x)\vert$ is uniformly bounded over $x\in\Omega$, so provided that $x\in B_j^{-1}(V_j)$, then for all $y\in U_{\delta^{\beta}/2}(x)$, $d(L^x_j(y),x_{V_j})\leq d(B_j(x),x_{V_j})+\Vert dB_j(x)\Vert_{L^{\infty}(\Omega)} |y-x|\leq \delta+\Vert dB\Vert\delta^{\beta}/2< 2\delta$, if we take $\delta>0$ to be sufficiently small. This implies that if $x\in B_j^{-1}(V_j)\cap \Omega$, then for $\delta>0$ sufficiently small, $L^x_j(U_{\delta^{\beta}/2}(x))\subset 2V_j$, which together with \eqref{eq:overlap1} yields that
 \begin{align}
 \sum_{z\in\Gamma(V_j)}\chi_{B_j^{-1}(\{z\})+U_{\delta^{\beta}}(0)}(x)&\geq\#\left(\exp_{x_{V_j}}\left(\Lambda^{\delta^{\alpha}}_{V_j}\right)\cap L^x_j(U_{\delta^{\beta}/2}(x))\right)\chi_{B_j^{-1}(V_j)}(x)\nonumber\\
 &\simeq\#\left(\exp_{x_{V_j}}\left(\Lambda^{\delta^{\alpha-\beta}}_{V_j}\right)\cap L^x_j(U_1(x))\right)\chi_{B_j^{-1}(V_j)}(x).\label{eq:overlap2}
 \end{align}
 Given $\varepsilon>0$, define $\mathcal{Q}_j^{\varepsilon}$ to be the cubic decomposition of $T_{x_{V_j}}M_j$ into $\varepsilon$-cubes whose sides are axis parallel and whose corresponding set of centres is $\Lambda^{\epsilon}_{V_j}$, and recall the definition of $c>0$ from the proof of Lemma \ref{linearise}. If we take $\delta^{\alpha-\beta}<c/10$, then for all $x\in \Omega$ and $Q\in\mathcal{Q}_j^{\delta^{\alpha-\beta}}$ such that $Q\cap L^x_j(U_{1/2}(x))\neq\emptyset$, we must have that $Q\subset L^x_j(U_{1}(x))$, since otherwise there would exist a point outside of $L^x_j(U_1(x))$ within a distance $c/2$ of $B_j(x)$, which implies that $dB_j(x)\vert_{\ker dB_j(x)^{\bot}}$ has an eigenvalue with absolute value less than $c$, which is of course a contradiction. Since the map that takes a cube in $\mathcal{Q}_j^{\delta^{\alpha-\beta}}$ to its centre then defines an injection from $D:=\{Q\in\mathcal{Q}_j^{\delta^{\alpha-\beta}}:Q\cap L^x_j(U_{1/2}(x))\neq\emptyset\}$ to $\exp_{x_{V_j}}\left(\Lambda^{\delta^{\alpha-\beta}}_{V_j}\right)\cap L^x_j(U_1(x))$, we obtain the following bound:
 \begin{align*}
 \sum_{z\in\Gamma(V_j)}\chi_{B_j^{-1}(\{z\})+U_{\delta^{\beta}}(0)}(x)&\geq(\#D)\chi_{B_j^{-1}(V_j)}(x)\\
 &=\big\vert\bigcup_{Q\in D}Q\big\vert\big\vert[0,\delta^{\alpha-\beta}]^{n_j}\big\vert^{-1}\chi_{B_j^{-1}(V_j)}(x)\\
 &\geq|L^x_j(U_{1/2}(x))|\delta^{(\beta-\alpha)n_j}\chi_{B_j^{-1}(V_j)}(x)\\
 &\simeq|dB_j(x)[0,1]^n|\delta^{(\beta-\alpha)n_j}\chi_{B_j^{-1}(V_j)}(x).
\end{align*}
Since $\chi_{B_j^{-1}(V_j)}=\chi_{V_j}\circ B_j$, the claim then follows from the fact that $|dB_j(x)[0,1]^{n}|=|R_j(x)|$, which follows from an inspection of the proof that $|L_j[0,1]^n|=|R_j|$ in Lemma \ref{formula}.
\end{proof}
Finally, we need a technical lemma that will allow us to bound the volumes of intersections of balls with varieties below by the characteristic functions arising on the right-hand side of \eqref{eq:overlap}.
\begin{lemma}\label{volumebound}
 Let $\Omega\subset U$ be compact, and fix $j\in\{1,...,m\}$. Then, for a sufficiently small choice of $\delta>0$ depending on $\Omega$, the following holds for all $x\in\Omega$ and $z\in M_j$:
 \begin{align}
     \delta^{\beta(n-n_j)}\chi_{B_j^{-1}(\{z\})+U_{\delta^{\beta}}(0)}(x)\lesssim|B_j^{-1}(\{z\})\cap U_{2\delta^{\beta}}(x)|.\label{eq:volumebound}
 \end{align}
\end{lemma}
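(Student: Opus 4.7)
The plan is to reduce (\ref{eq:volumebound}) to a uniform quantitative implicit function theorem statement for the fibres of $B_j$. First I would observe that if the characteristic function on the left-hand side is nonzero, then there exists $y\in B_j^{-1}(\{z\})$ with $|x-y|\leq\delta^{\beta}$, so in particular $y\in U_{2\delta^{\beta}}(x)$. It therefore suffices to exhibit a subset of $B_j^{-1}(\{z\})\cap U_{2\delta^{\beta}}(x)$ whose $(n-n_j)$-dimensional Hausdorff measure is $\gtrsim\delta^{\beta(n-n_j)}$.

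To set up uniform control, I would first enlarge $\Omega$ slightly to a compact $\Omega'\Subset U$ containing the $\delta^{\beta}$-neighbourhood of $\Omega$ for all sufficiently small $\delta$, so that $y\in\Omega'$. Since $B_j$ is a $C^{\infty}$ submersion on $U$ and $\Omega'$ is compact, one gets a uniform lower bound $c>0$ on the smallest singular value of $dB_j$ restricted to $\ker(dB_j(\cdot))^{\bot}$ together with a uniform upper bound on $\Vert d^2B_j\Vert$ over $\Omega'$. Writing $K_y:=\ker dB_j(y)$, the quantitative implicit function theorem then produces an $r>0$ depending only on $\Omega'$ such that for every $y\in\Omega'$, the intersection $B_j^{-1}(\{B_j(y)\})\cap U_r(y)$ coincides with the graph of a smooth map $\phi_y:K_y\cap U_r(0)\to K_y^{\bot}$ satisfying $\phi_y(0)=0$, $d\phi_y(0)=0$, and $\Vert d\phi_y\Vert_{L^{\infty}}\leq 1/2$.

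Taking $\delta>0$ small enough that $2\delta^{\beta}<r$, the restriction of this graph to the disk $K_y\cap U_{\delta^{\beta}}(0)$ lies inside $U_{3\delta^{\beta}/2}(y)\subset U_{2\delta^{\beta}}(x)$, since $|w+\phi_y(w)|\leq 3|w|/2$. This yields a subset of $B_j^{-1}(\{z\})\cap U_{2\delta^{\beta}}(x)$ whose $(n-n_j)$-dimensional surface area is bounded below by the Lebesgue measure of the base disk in $K_y$, namely $\simeq\delta^{\beta(n-n_j)}$, with an implicit constant depending only on $\Omega$ and the dimensions.

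The main obstacle is ensuring that the radius $r$ of the graph representation may be chosen uniformly in $y\in\Omega'$. This is standard via the contraction mapping proof of the implicit function theorem once the uniform $C^2$ bounds on $B_j$ and the uniform lower bound on the singular values of $dB_j$ are in hand, but if one prefers to avoid that calculation, the same conclusion follows by recycling the Frobenius-theoretic foliation of Lemma \ref{linearise}, applied to the kernel distribution $y\mapsto\ker dB_j(y)$ in place of its orthogonal complement, and parametrising individual fibres as graphs over the corresponding tangent spaces.
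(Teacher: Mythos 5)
Your argument is sound and reaches the same conclusion, but it is organized a bit differently from the paper's proof. The paper fixes a base point $x_0\in\Omega$, reduces to the case $M_j\subset\R^{n_j}$, and builds an explicit ``shear-plus-rotation'' diffeomorphism $\phi(y):=A(y+S\psi(y))$ (with $\psi:=B_j-dB_j(x_0)$ and $S$ a right inverse of $dB_j(x_0)$) which sends every fibre of $B_j$ near $x_0$ to a translate of $\R^{n-n_j}\times\{0\}^{n_j}$; the volume estimate is then a change of variables followed by an elementary computation of the radius of a flat disc in a ball. You instead pick the nearby fibre point $y\in B_j^{-1}(\{z\})\cap U_{\delta^{\beta}}(x)$, enlarge $\Omega$ slightly, and invoke a uniform quantitative implicit function theorem to represent the fibre as a Lipschitz graph over $\ker dB_j(y)$. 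These are two realizations of the same underlying submersion normal form; yours is somewhat more direct, and has the advantage of bypassing the paper's preliminary reduction to a Euclidean target via exponential coordinates (your graph lives entirely in the domain $\R^n$, so the Riemannian target plays no role). What the paper's construction buys is that the uniformity in the base point and in $z$ is displayed explicitly through $\phi$, rather than being delegated to the phrase ``quantitative implicit function theorem''; if you wanted to make your proof self-contained you would need to unwind the contraction-mapping estimate, at which point the two proofs converge.

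One small arithmetic slip: from $|x-y|<\delta^{\beta}$ and $\|d\phi_y\|_{\infty}\leq 1/2$, restricting the graph to $K_y\cap U_{\delta^{\beta}}(0)$ only places it in $U_{3\delta^{\beta}/2}(y)\subset U_{5\delta^{\beta}/2}(x)$, which is not contained in $U_{2\delta^{\beta}}(x)$. You should shrink the base disc to $K_y\cap U_{c\delta^{\beta}}(0)$ for, say, $c=1/2$, so that the graph lies in $U_{3\delta^{\beta}/4}(y)\subset U_{2\delta^{\beta}}(x)$; the resulting base disc still has $(n-n_j)$-dimensional measure $\simeq\delta^{\beta(n-n_j)}$, so the conclusion is unchanged.
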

\begin{proof}
We shall begin with some reductions. First of all, we fix $z\in M_j$, making sure in what comes after that our choice $\delta>0$ does not depend on this particular choice of $z\in M_j$. Suppose that for each choice of $x_0\in\Omega$, there exists a corresponding choice of $\delta_{x_0}>0$ such that \eqref{eq:volumebound} holds for each $x\in U_{\delta_{x_0}^2}(x_0)$ and $0<\delta\leq\delta_{x_0}$. The set $\{U_{\delta_{x_0}^2}(x_0):x_0\in\Omega\}$ is then an open cover of $\Omega$, so by compactness of $\Omega$ we may take a finite subcover $\mathcal{U}$. The minimal radius among the balls in $\mathcal{U}$, which we shall denote by $\tilde{\delta}$, is such that \eqref{eq:volumebound} holds for all $\delta\in(0,\tilde{\delta})$ and $x\in \Omega$, so the lemma would then hold. It therefore suffices to fix $x_0\in\Omega$ and prove the claim that there exists a $\delta_{x_0}$ such that \eqref{eq:volumebound} holds for each $x\in U_{\delta_{x_0}^2}(x_0)$ and $0<\delta\leq\delta_{x_0}$.\par 
Furthermore, we may assume that $M_j$ is an open subset of $\R^{n_j}$. To justify this, by compactness of $\Omega$ and continuity of $B_j$, we may choose a $\delta>0$ sufficiently small such that $\exp_x$ is a diffeomorphism on $U_{\delta}(0)\subset T_yM_j$ for each $y\in B_j(\Omega)$. We then restrict $B_j$ to $B_j^{-1}(U_{\delta}(z))$ and prove that the claim holds with $B_j$ replaced with $\widetilde{B}_j:=\exp^{-1}_z\circ B_j$, and $z$ replaced with $0\in\R^{n_j}$, since in this case  $\widetilde{B}_j^{-1}(\{0\})=B_j^{-1}(\{z\})$, hence we would obtain the claim for our original choice of $B_j$.\par
Fix $x_0\in \Omega$, recall the definition of $L^{x_0}_j$ from Lemma \ref{overlap} and let $A\in SO(n)$ be a rotation such that $A\ker dB_j(x_0)=\mathbb{R}^{n-n_j}\times\{0\}^{n_j}$. Since $B_j$ is a submersion on $\Omega$, $dB_j(x_0)$ is surjective, hence it admits a right inverse, call it $S$. Let $\psi:=B_j-dB_j(x_0)$. We define the function $\phi:\mathbb{R}^n\rightarrow\mathbb{R}^n$ by
$$\phi(y):=A(y+S\psi(y))).$$
For all $y\in B_j^{-1}(\{z\})$, $z=B_j(y)=dB_j(x_0)y+\psi(y)=dB_j(x_0)(y+S\psi(y))=dB_j(x_0)(A^{-1}\phi(y))$, so $A^{-1}\phi(y)\in dB_j(x_0)^{-1}(\{z\})$, hence $A^{-1}\phi(y)-Sz\in\ker dB_j(x_0)$, so $\phi(y)\in\mathbb{R}^{n-n_j}\times\{0\}^{n_j}+ASz$. We have now shown that $\phi(B_j^{-1}(\{z\}))\subset\mathbb{R}^{n-n_j}\times\{0\}^{n_j}+ASz$. Moreover, one quickly verifies that $d\phi(x_0)=A(I+Sd\psi(x_0))=A$, hence $\phi$ is a diffeomorphism in a sufficiently small ball around $x_0$, therefore by taking $\delta$ to be sufficiently small, we may assume that, for all $x\in U_{\delta^{2\beta}}(x_0)$,  $U_{\frac{3\delta^{\beta}}{2}}(\phi(x))\subset \phi(U_{2\delta^{\beta}}(x))$ and  $\det(d\phi|_{B_j^{-1}(\{z\})}(y))\simeq 1$ for all $y\in U_{\delta}(x)$, from which it follows that, for all $x\in U_{\delta^{2\beta}}(x_0)$,
\begin{align}
    |B_j^{-1}(\{z\})\cap U_{2\delta^{\beta}}(x)|&=\int_{(\mathbb{R}^{n-n_j}\times\{0\}+ASz)\cap\phi(U_{2\delta^{\beta}}(x))}\det(d\phi|_{B_j^{-1}(\{z\})}(y))^{-1}dy\nonumber\\
    &\simeq|(\mathbb{R}^{n-n_j}\times\{0\}^{n_j}+ASz)\cap\phi(U_{2\delta^{\beta}}(x))|\nonumber\\
    &\geq|(\mathbb{R}^{n-n_j}\times\{0\}^{n_j}+ASz)\cap U_{\frac{3\delta^{\beta}}{2}}(\phi(x))|\nonumber\\
    &\geq|(\mathbb{R}^{n-n_j}\times\{0\}^{n_j}+ASz)\cap U_{\frac{3\delta^{\beta}}{2}}(\phi(x))|\chi_{B_j^{-1}(\{z\})+ U_{\delta^{\beta}}(0)}(x). \label{eq:volumebound1}
\end{align}
Since $\phi$ is smooth and $d\phi(x_0)=A$ is an isometry, if $x_0\in B_j^{-1}(\{z\})+U_{\delta^{\beta}}(0)$ and $\delta$ is sufficiently small then by Taylor's theorem we know that for all $x\in U_{\delta^{2\beta}}(x_0)$, $\phi(x)\in \phi(B_j^{-1}(\{z\}))+U_{\frac{5\delta^{\beta}}{4}}(0)=(\mathbb{R}^{n-n_j}\times\{0\}^{n_j}+ASz)+U_{\frac{5\delta^{\beta}}{4}}(0)$. In other words, $\dist(\phi(x),(\mathbb{R}^{n-n_j}\times\{0\}^{n_j}+ASz))\leq\frac{5\delta^{\beta}}{4}$, hence $(\mathbb{R}^{n-n_j}\times\{0\}^{n_j}+ASz)\cap U_{\frac{3\delta^{\beta}}{2}}(\phi(x)))$ is an $(n-n_j)$-disc of radius at least $\sqrt{\frac{9\delta^{2\beta}}{4}-\frac{25\delta^{2\beta}}{16}}\simeq\delta^{\beta}$, therefore
\begin{align}
    |(\mathbb{R}^{n-n_j}\times\{0\}^{n_j}+ASz)\cap U_{\frac{3\delta^{\beta}}{2}}(\phi(x)))|\chi_{B_j^{-1}(\{z\})+ U_{\delta^{\beta}}(0)}(x)\gtrsim\delta^{\beta(n-n_j)}\chi_{B_j^{-1}(\{z\})+ U_{\delta^{\beta}}(0)}.\label{eq:volumebound2}
\end{align}
This bound together with (\ref{eq:volumebound1}) then yields the claim.
\end{proof}
\section{Proof of Proposition \ref{discretealgbras}}
Let $\Omega\subset U$ and choose $\delta>0$ so that we may apply Corollary \ref{BLlocconst}, Lemma \ref{overlap}, and Lemma  \ref{volumebound} to $\Omega$. After first applying Lemma \ref{formula}, they yield the following pointwise estimate for all $x\in\Omega$,
\begin{align}
    \BL(\textsbf{dB}(x)&,\textsbf{p})^{-1}\prod_{j=1}^m\left(\sum_{V_j\in\mathcal{V}_j}\chi_{V_j}\circ B_j(x)\right)^{p_j}=BL'(\overrightarrow{\ker dB_j(x)},\textsbf{p})^{-1}\prod_{j=1}^m\left(\sum_{V_j\in\mathcal{V}_j}|R_j(x)|\chi_{V_j}\circ B_j(x)\right)^{p_j}\nonumber\\
    &\lesssim BL'(\overrightarrow{\ker dB_j(x)},\textsbf{p})^{-1}\prod_{j=1}^m\delta^{(\alpha-\beta)p_jn_j}\left(\sum_{V_j\in\mathcal{V}_j}\sum_{z\in\Gamma(V_j)}\chi_{B_j^{-1}(\{z\})+U_{\delta^{\beta}}(0)}(x)\right)^{p_j}\nonumber\\
    &=\delta^{(\alpha-\beta)n}BL'(\overrightarrow{\ker dB_j(x)},\textsbf{p})^{-1}\prod_{j=1}^m\left(\sum_{V_j\in\mathcal{V}_j}\sum_{z\in\Gamma(V_j)}\chi_{B_j^{-1}(\{z\})+U_{\delta^{\beta}}(0)}(x)\right)^{p_j}\nonumber\\
    &\lesssim \delta^{(\alpha-\beta)n}BL'(\overrightarrow{\ker dB_j(x)},\textsbf{p})^{-1}\prod_{j=1}^m\delta^{-\beta p_j(n-n_j)}\left(\sum_{V_j\in\mathcal{V}_j}\sum_{z\in\Gamma(V_j)}|B^{-1}_j(\{z\})\cap U_{2\delta^{\beta}}(x)|\right)^{p_j}\nonumber\\
    &=\delta^{(\alpha-\beta P)n}BL'(\overrightarrow{\ker dB_j(x)},\textsbf{p})^{-1}\prod_{j=1}^m|H_j\cap U_{2\delta^{\beta}}(x)|^{p_j}.\label{eq:proof1}
\end{align}
Above we used the scaling condition $\sum_{j=1}p_jn_j=n$ to pull out the power of $\delta$ from the product. By Corollary \ref{BLlocconst}, for all $x\in\Omega$ and $x_1,...,x_m\in H_j\cap U_{2\delta^{\beta}}(x)$,
\begin{align}
    BL'(\overrightarrow{\ker dB_j(x)},\textsbf{p})^{-\frac{1}{P}}\simeq BL'(\overrightarrow{T_{x_j}H_j},\textsbf{p})^{-\frac{1}{P}}. \label{BLlocconst2}
\end{align}
We may then average \eqref{BLlocconst2} via the Fremlin tensor product norm to find that find that
\begin{align}
    BL'(\overrightarrow{\ker dB_j(x)},\textsbf{p})^{-1}\prod_{j=1}^m|H_j\cap U_{2\delta^{\beta}}(x)|^{p_j}\simeq \Vert BL'(\overrightarrow{T_{x_j}H_j},\textsbf{p})^{\frac{-1}{P}}\Vert_{\widebar{\bigotimes}_{j=1}^m L^{P/p_j}_{x_j}(H_j\cap U_{2\delta^{\beta}}(x))}^P.\label{eq:BLlocconst}
\end{align}
We then integrate the inequality \eqref{eq:proof1} combined with \eqref{eq:BLlocconst} with respect to $x$ over $\Omega$.
\begin{align*}
    \int_{\Omega}\prod_{j=1}^m\left(\sum_{V_j\in\mathcal{V}_j}\chi_{V_j}\circ B_j(x)\right)^{p_j}&\frac{dx}{\BL(\textsbf{dB}(x),\textsbf{p})}\nonumber\\
    &\lesssim \delta^{(\alpha-\beta P)n} \int_{\Omega}\Vert BL'(\overrightarrow{T_{x_j}H_j},\textsbf{p})^{\frac{-1}{P}}\Vert_{\widebar{\bigotimes}_{j=1}^mL^{P/p_j}_{x_j}(H_j\cap U_{2\delta^{\beta}}(x))}^P
\end{align*}
At this point we then apply Lemma \ref{rescale} to rescale the inner integral so that we may then apply Theorem \ref{zorin}. Finally, using the bound on the degree of $Z(S_j)\supset H_j$ \ref{eq:degree}, we obtain (\ref{eq:discretealgbras}), completing the proof.
\begin{align*}
 \int_{\Omega}&\prod_{j=1}^m\left(\sum_{V_j\in\mathcal{V}_j}\chi_{V_j}\circ B_j(x)\right)^{p_j}\frac{dx}{\BL(\textsbf{dB}(x),\textsbf{p})}\\
    &\lesssim \delta^{\beta\sum_{j=1}^mp_j(n-n_j)}\delta^{(\alpha-\beta(P-1))n} \int_{\tfrac{\delta^{-\beta}}{2}\Omega}\Vert BL'(\overrightarrow{T_{x_j}\left(\tfrac{\delta^{-\beta}}{2}H_j\right)},\textsbf{p})^{\frac{-1}{P}}\Vert_{\widebar{\bigotimes}_{j=1}^mL^{P/p_j}_{x_j}\left(\tfrac{\delta^{-\beta}}{2}H_j\cap U_1(x)\right)}^Pdx\\
    &\leq \delta^{\alpha n} \int_{\R^n}\Vert BL'(\overrightarrow{T_{x_j}\left(\tfrac{\delta^{-\beta}}{2}H_j\right)},\textsbf{p})^{\frac{-1}{P}}\Vert_{\widebar{\bigotimes}_{j=1}^mL^{P/p_j}_{x_j}\left(\tfrac{\delta^{-\beta}}{2}H_j\cap U_1(x)\right)}^Pdx\\
    &\leq \delta^{\alpha n} \int_{\R^n}\Vert BL'(\overrightarrow{T_{x_j}\left(\tfrac{\delta^{-\beta}}{2}Z(S_j)\right)},\textsbf{p})^{\frac{-1}{P}}\Vert_{\widebar{\bigotimes}_{j=1}^mL^{P/p_j}_{x_j}\left(\tfrac{\delta^{-\beta}}{2}Z(S_j)\cap U_1(x)\right)}^Pdx\\
    &\lesssim \delta^{\alpha n}\prod_{j=1}^m\left(\deg Z(S_j)\right)^{p_j}
    \lesssim \delta^{\alpha n}\prod_{j=1}^m\left(\deg(B_j)\delta^{(1-\alpha)n_j}\#\mathcal{V}_j\right)^{p_j}=\prod_{j=1}^m\left(\deg(B_j)\delta^{n_j}\#\mathcal{V}_j\right)^{p_j}
\end{align*}
\qed
\section{Appendix}\label{appendix}
Here we prove some of the technical lemmas stated in Sections \ref{intro} and \ref{setup} and the application to Young's inequality on algebraic groups (Corollary \ref{youngs}).
\begin{proof}[Proof of Lemma \ref{sigmapprox}]
Let $w(x):=\det(dB(x)dB(x)^*)^{-\frac{1}{2}}$. The lemma follows from the co-area formula and the continuity of the quantity $\int_{B^{-1}(\{z\})}f(x)w(x)d\sigma(x)$ in $z\in N$, since we then have that
    \begin{align*}
        &\left\vert\int_{A}f(x)\chi_{\delta}\circ B(x)dx-\int_{B^{-1}(\{z_0\})}f(x)w(x)d\sigma(x)\right\vert\\
        &=\delta^{d-n}\left\vert\int_{U_{\delta}(0)}\left(\int_{A\cap B^{-1}(\{z\})}f(x)w(x)d\sigma(x)-\int_{A\cap B^{-1}(\{z_0\})}f(x)w(x)d\sigma(x)\right)dz\right\vert\nonumber\\
        &\lesssim\left\Vert \int_{A\cap B^{-1}(\{z\})}f(x)w(x)d\sigma(x)-\int_{A\cap B^{-1}(\{z_0\})}f(x)w(x)d\sigma(x) \right\Vert_{L^{\infty}_z(U_{\delta}(z_0))}\\
        &\underset{\delta\rightarrow 0}{\longrightarrow} 0.
    \end{align*}
\end{proof}
\begin{proof}[Proof of Lemma \ref{BLconstants}]
The fact that the scaling condition is satisfied is trivial. As for the second claim, we shall first prove that $\BL(\textsbf{L},\textsbf{p})\leq\det(L_{m+1}L_{m+1}^*)^{\frac{1}{2}}\BL(\widetilde{\textsbf{L}},\widetilde{\textsbf{p}})$. If we let $\chi_{\delta}:\R^{n-d}\rightarrow\R$ be as in Lemma \ref{sigmapprox}, with $z_0=0$, and we take arbitrary $f_j\in L^1(\R^{n_j})$, then by Lemma \ref{sigmapprox}, we have that
\begin{align*}
    \int_V\prod_{j=1}^mf_j\circ L_j(x)^{p_j}dx&=\det(L_{m+1}L_{m+1}^*)^{\frac{1}{2}}\underset{\delta\rightarrow 0}{\lim}\int_{\R^n}\left(\prod_{j=1}^mf_j\circ L_j(x)^{p_j}\right)\chi_{\delta}\circ L_{m+1}(x)dx\\
    &\leq\det(L_{m+1}L_{m+1}^*)^{\frac{1}{2}}\BL(\widetilde{\textsbf{L}},\widetilde{\textsbf{p}})\prod_{j=1}^m\left(\int_{ M_j}f_j\right)^{p_j},
\end{align*}
which establishes that $\BL(\textsbf{L},\textsbf{p})\leq\det(L_{m+1}L_{m+1}^*)^{\frac{1}{2}}\BL(\widetilde{\textsbf{L}},\widetilde{\textsbf{p}})$. We shall now prove the converse inequality. For each $1\leq j\leq m+1$, let $f_j\in C^\infty(\R^{n_j})$ be a smooth function with unit mass. The claim quickly follows upon decomposing $\R^n$ into $V\oplus V^{\bot}$ and applying the Brascamp-Lieb inequality associated to the datum $(\textsbf{L},\textsbf{p})$ to the integral over $V$:
\begin{align*}
    \int_{\R^n}\prod_{j=1}^{m+1}f_j\circ L_j(x)^{p_j}dx&=\int_{V^{\bot}}\left(\int_{V}\prod_{j=1}^mf_j(L_j(x)+ L_j(y))^{p_j}dx\right)f_{m+1}\circ L_{m+1}(y)dy\\
    &\leq \BL(\textsbf{L},\textsbf{p})\int_{V^{\bot}}f_{m+1}\circ L_{m+1}(y)\prod_{j=1}^m\left(\int_{ M_j}f_j(z+L_j(y))dz\right)^{p_j}dy\\
    &=\BL(\textsbf{L},\textsbf{p})\det(L_{m+1}L_{m+1}^*)^{-\frac{1}{2}}.
\end{align*}
\end{proof}
\begin{proof}[Proof of Corollary \ref{youngs}]
 By duality, (\ref{eq:youngs}) is equivalent to the bound
\begin{align}
    \int_{G}\phi(x)\left(\Asterisk_{j=1}^mf_j\Delta^{\sum_{l=1}^{j-1}\frac{1}{p_l'}}\right)(x)d\mu(x)\lesssim\deg(G)\Vert\phi\Vert_{L^{r'}(G)}\prod_{j=1}^m\Vert f_j\Vert_{L^{p_j}(G)}. \label{eq: youngsdual}
\end{align}
For $1\leq j\leq m$, define the nonlinear maps $B_j:G^m\rightarrow G$, $B_j(x_1,...,x_m):=x_j$, and $B_{m+1}:G^m\rightarrow G$, $B_{m+1}(x_1,...,x_m):=\prod_{j=1}^m x_j$. Deleting the null set of singular points from their ranges, these maps are quasialgebraic of degree $1$ for $1\leq j\leq m$, and $\deg(B_{m+1})\leq \deg(m_G)$, hence by Theorem \ref{genalgbras} we know that
\begin{align}
     \int_{G^m}\phi\left(\prod_{j=1}^mx_j\right)\prod_{j=1}^mf_j(x_j)\frac{d\sigma_1(x_1)...d\sigma_m(x_m)}{\BL_{T_{\underline{x}}G^m}(\textsbf{dB}(\underline{x}),\textsbf{p})}\lesssim\deg(G)\deg(m_g)^{\sigma}\Vert\phi\Vert_{L^{r'}(G)}\prod_{j=1}^m\Vert f_j\Vert_{L^{p_j}(G)} 
\end{align}
where $\underline{x}:=(x_1,...,x_m)$, which is equivalent to (\ref{eq: youngsdual}) provided we have the identity
\begin{align}
    \BL_{T_{\underline{x}}G^m}(\textsbf{dB}(\underline{x}),\textsbf{p})=B_{\textsbf{p},n}\prod_{j=1}^m\omega(x_j)^{-1}\Delta(x_j)^{-\sum_{l=1}^{j-1}\frac{1}{p_j'}} \label{eq: youngsid}
\end{align}
at all configurations of smooth points $x_1,...,x_m$ of $G$, where $d\mu(x)=\omega(x) d\sigma(x)$, and $B_{\textsbf{p},n}$ is the best constant for the $n$-dimensional euclidean multilinear Young's inequality associated to the exponents $\textsbf{p}:=(p_1,...,p_m)$. Let $x_1,...,x_m\in G$ be smooth points, the left-hand side of (\ref{eq: youngsid}) is by definition the best constant $C>0$ in the inequality
\begin{align}
    \int_{\prod_{j=1}^mT_{x_j}G}\phi\left(\sum_{j=1}^mx_1...x_{j-1}v_jx_{j+1}...x_m\right)\prod_{j=1}^mf_j(v_j)dv_j\leq C\Vert\phi\Vert_{L^{r'}(T_{x_1...x_m}G)}\prod_{j=1}^m\Vert f_j \Vert_{L^{p_j}(T_{x_j}G)}, \label{eq: youngsder}
\end{align}
where the Lebesgue measure on the left-hand side is induced by the Lebesgue measure on the ambient euclidean space, and the Lebesgue measures defining the norms on the right-hand side are induced by the left-invariant Riemannian metric on $G$.\par
First of all, we multiply the measure on the left by the constant $\prod_{j=1}^m\omega(x)$ for convenience. We then apply the linear transformation from the Lie algebra $\frak{g}$ to $T_{x_j}G$ defined by the mapping $v_j\mapsto x_1...x_m(x_1...x_{j-1})^{-1}v_j(x_{j+1}...x_m)^{-1}$, this is to turn the left-hand side of (\ref{eq: youngsder}) into an integral to which we may directly apply the euclidean Young's inequality:
\begin{align*}
    &\int_{\prod_{j=1}^mT_{x_j}G}\phi\left(\sum_{j=1}^mx_1...x_{j-1}v_jx_{j+1}...x_m\right)\prod_{j=1}^mf_j(x_j)\omega(x_j)dv_j\nonumber\\
    &=\int_{\frak{g}^m}\phi\left(x_1...x_m\sum_{j=1}^mv_j\right)\prod_{j=1}^mf_j((x_1...x_{j-1})^{-1}x_1...x_mv_j(x_{j+1}...x_m)^{-1})dx_j\Delta(x_{j+1}...x_m)^{-1}dv_j\\
    &\leq B_{\textsbf{p},n}\Vert \phi(x_1...x_mv)\Vert_{L^{r'}_v(\frak{g})}\prod_{j=1}^m\Delta(x_{j+1}...x_m)^{-1}\Vert f_j((x_{j+1}...x_m)^{-1}x_1...x_mv_j(x_{j+1}...x_m)^{-1})\Vert_{L^{p_j}_{v_j}(\frak{g})}\\
    &=B_{\textsbf{p},n}\Vert \phi\Vert_{L^{r'}(T_{x_1...x_m}G)}\prod_{j=1}^m\Delta(x_{j+1}...x_m)^{\frac{1}{p_j}-1}\Vert f_j\Vert_{L^{p_j}(T_{x_j}G)}\\
    &=B_{\textsbf{p},n}\Vert \phi\Vert_{L^{r'}(T_{x_1...x_m}G)}\prod_{j=1}^m\Delta(x_j)^{-\sum_{l=1}^{j-1}\frac{1}{p_l'}}\Vert f_j\Vert_{L^{p_j}(T_{x_j}G)}.
\end{align*}
Since this inequality is sharp by definition of $B_{\textsbf{p},n}$, we have established (\ref{eq: youngsid}), thus completing the proof.
\end{proof}
\bibliographystyle{plain}
\bibliography{bib}

\end{document}